\documentclass[12pt,a4paper]{amsart}
\usepackage{amsfonts}
\usepackage{amsthm}
\usepackage{amsmath}
\usepackage{amscd}
\usepackage[latin2]{inputenc}
\usepackage{t1enc}
\usepackage[mathscr]{eucal}
\usepackage{indentfirst}
\usepackage{graphicx}
\usepackage{graphics}
\usepackage{pict2e}
\usepackage{epic}
\usepackage{amssymb}
\usepackage[colorlinks,citecolor=red,urlcolor=blue,bookmarks=false,hypertexnames=true]{hyperref} 
\numberwithin{equation}{section}
\usepackage[margin=3cm]{geometry}

\theoremstyle{plain}
\newtheorem{Th}{Theorem}[section]
\newtheorem{Lemma}[Th]{Lemma}

\newtheorem{Prop}[Th]{Proposition}
\newtheorem*{Theorem-non}{Theorem}
\newtheorem*{Theorem-non2}{Theorem}

 \theoremstyle{definition}
 \newtheorem*{Proof-non}{Proof of Theorem \ref{Maintheorem} assuming Propositions \ref{Prop1},\ref{Propm}}
\newtheorem*{Proof-non2}{Proof of (1)  ($\bf{m_{1}}$-estimate) in Proposition \ref{Propm} assuming Proposition \ref{Proposition 5.1}}
\newtheorem*{Proof-non3}{Proof of Theorem \ref{Maintheorem2} assuming Propositions \ref{Prop1},\ref{Propm}}
\newtheorem*{Proof-non4}{Proof of Proposition \ref{Prop1}}
\newtheorem*{Proof-non5}{Proof of Proposition \ref{Propm}}

\newtheorem{Def}[Th]{Definition}

\newtheorem{?}[Th]{Problem}

\begin{document}

\author{Jiseong Kim}
\address{The University of Mississippi, Department of Mathematics
Hume Hall 335
Oxford, MS 38677}
\email{Jkim51@olemiss.edu}
\title{
GOLDBACH'S PROBLEM
IN SHORT INTERVALS FOR NUMBERS WITH A MISSING DIGIT}

\begin{abstract} 
In this paper, by assuming a zero-free region for Dirichlet L-functions, we show that almost all even integers $n$ in a short interval $[x,x+x^{2/3+\varepsilon}]$ with a missing digit  are Goldbach numbers.
\end{abstract}

\maketitle
\section{Introduction} Let \( X \) be a power of a natural number \( g \), and let \([X, X+H]^{\ast}\) be the set of natural numbers \( X < n \leq X+H \) such that \( n \) has only digits from \(\{0, 1, 2, 3, \dots, g-1\} \setminus \{b\}\) in its base-\( g \) expansion, for some \( b \in \{2, 3, \dots, g-1\} \). The size of \([X, X+H]^{\ast}\) is approximately \( H^{\log (g - 1) / \log g} \) when \( X \) is a power of \( g \). 

Due to the lack of multiplicative structure among the elements in \([X, X+H]^{\ast}\), some standard approaches for summations of arithmetic functions over \([X, X+H]^{\ast}\) do not work. However, because \([X, X+H]^{\ast}\) has a nice Fourier transform, various interesting results have been proved (see \cite{Erdos1}, \cite{Erdos2}, \cite{Konyagin}, \cite{Banks}, \cite{Mau}). More recently, Maynard \cite{Jmay} proved that there are infinitely many primes with restricted digits, using various sieve methods and Fourier analysis. He established very strong upper bounds for 
\[
\sum_{n \in [1, X]^{\ast}} e(\alpha n)
\]
and its moments. This allowed him to apply the circle method to his problem, which is considered a binary problem. For other arithmetic functions over integers with restricted digits, see \cite{Nath}.

We say that an even number that can be written as a sum of two primes is a Goldbach number. In \cite{PP}, A. Perelli and J. Pintz showed that almost all even integers in \([X, X+X^{1/3+\varepsilon}]\) are Goldbach numbers. In this paper, we follow their argument to prove that almost all even numbers in \([X, X+H]^{\ast}\) are also Goldbach numbers under the following assumption:

\textbf{Assumption}: Let us denote $L(s,\chi)= \sum_{n=1}^{\infty} \chi(n)n^{-s}$ where $\chi$ is a Dirichlet character.  \( L(s,\chi) \neq 0 \) when \( \Re(s) > 1 - c_2 \), for some small fixed constant \( c_2 > 0 \).

In \cite{kimdiv}, we proved that  
\begin{equation}  
\sum_{n \in [X, X+H]^{\ast}} d_{2}(n) \ll \left|[X, X+H]^{\ast}\right| (\log X)^{3}  
\end{equation}  
when \( H \) is larger than \( X^{1/2} \). We will follow a similar argument to prove the following result, using recent results on exponential sum estimates in \cite{MSTT1}. 

\begin{Th}\label{Theorem1.1} Let $X^{3/5+\varepsilon} \ll H \ll X^{1-\varepsilon},$ and let $g$ be sufficiently large depends on $\varepsilon.$ Then 

$$\sum_{n \in [X,X+H]^{\ast}} d_{4}(n) \ll \ (\log X)^{7} | [x,x+H]^{\ast}|. $$
\end{Th}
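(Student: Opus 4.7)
The plan is to mimic the $d_{2}$ argument of \cite{kimdiv} with the decomposition $d_{4}=d_{2}\ast d_{2}$, reducing the question to a bilinear sum over the restricted-digit set, and then feeding in the exponential sum estimates of \cite{MSTT1}. First, we write
\[
\sum_{n\in[X,X+H]^{\ast}} d_{4}(n) \;=\; \sum_{e,f:\,ef\in[X,X+H]^{\ast}} d_{2}(e)\,d_{2}(f),
\]
and apply a smooth dyadic partition of unity in $(e,f)$. This reduces matters to bounding the bilinear sums
\[
\Sigma(E,F) \;=\; \sum_{\substack{e\sim E,\,f\sim F \\ ef\in[X,X+H]^{\ast}}} d_{2}(e)\,d_{2}(f),\qquad EF\asymp X,
\]
where by symmetry we may assume $E\le F$ (so $E\le X^{1/2}$), dyadic scale by dyadic scale.

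Next, we apply a circle-method-style decomposition. Using the Fourier identity
\[
\mathbf{1}_{[X,X+H]^{\ast}}(m)=\int_{0}^{1} S^{\ast}(\alpha)\,e(-m\alpha)\,d\alpha,\qquad S^{\ast}(\alpha):=\sum_{m\in[X,X+H]^{\ast}}e(m\alpha),
\]
we rewrite $\Sigma(E,F)$ as an integral of $S^{\ast}(\alpha)$ against a bilinear exponential sum $B_{E,F}(\alpha)=\sum_{e\sim E,\,f\sim F} d_{2}(e)\,d_{2}(f)\,e(-ef\alpha)$, and split $[0,1]=\mathfrak{M}\cup\mathfrak{m}$. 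On the major arcs $\mathfrak{M}$ a local analysis relates the contribution to the corresponding integral over $[X,X+H]$, which produces an admissible $(\log X)^{O(1)}|[X,X+H]^{\ast}|$ main term via the classical fourth-divisor average. On the minor arcs $\mathfrak{m}$, Cauchy--Schwarz in one of the bilinear variables together with a Parseval-type second-moment bound for $B_{E,F}$ (which unfolds to the classical count of $d_{2}$-weighted solutions of $e_{1}f_{1}=e_{2}f_{2}$), combined with the minor-arc bound for $S^{\ast}$ from \cite{MSTT1}, delivers the required savings.

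When $E\le X^{\delta}$ the situation is easier: for each fixed $e$ the inner sum $\sum_{f}d_{2}(f)\mathbf{1}_{[X,X+H]^{\ast}}(ef)$ is treated directly by expanding the indicator via $S^{\ast}$ and estimating the resulting twisted exponential sum with \cite{MSTT1}, after which the outer summation in $e$ with weight $d_{2}(e)$ contributes only $(\log X)^{O(1)}$. The ranges $X^{\delta}<E\le X^{1/4}$ and $X^{1/4}<E\le X^{1/2}$ are handled by the bilinear procedure above, with Cauchy--Schwarz performed in whichever variable has the larger dyadic scale.

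The main obstacle is the balanced Type~II regime $E\asymp F\asymp X^{1/2}$, where the bilinear form is most symmetric and the Cauchy--Schwarz loss is maximal; only a sufficiently strong minor-arc decay for $S^{\ast}(\alpha)$ can close the estimate there. This is precisely where the hypothesis $H\ge X^{3/5+\varepsilon}$ is forced: the minor-arc bound of \cite{MSTT1} yields the required savings exactly in this range of $H$, and for smaller $H$ the balanced case cannot be settled by this argument. Once the balanced case is in hand, the remaining pieces assemble with at most $(\log X)^{O(1)}$ losses, yielding the advertised bound $(\log X)^{7}\,|[X,X+H]^{\ast}|$.
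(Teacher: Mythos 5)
Your decomposition $d_{4}=d_{2}\ast d_{2}$ followed by a dyadic bilinear treatment is a genuinely different route from the paper's, which never faces a balanced bilinear form at all. The paper replaces $d_{4}$ by the Type~I approximant
\[
d_4^{\sharp}(n)=\sum_{\substack{m \leq R_4^{6} \\ m \mid n}} P_m(\log n),\qquad R_4^{6}=X^{3\varepsilon/20},
\]
whose divisors are all tiny, and controls the approximation error via the deep uniform exponential-sum estimate of \cite{MSTT1}, namely $\sup_{\alpha}\bigl|\sum_{X\le n\le X+H}(d_{4}-d_{4}^{\sharp})(n)e(n\alpha)\bigr|\ll HX^{-c\varepsilon}$ for $H\ge X^{3/5+\varepsilon}$. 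That single black-box result is where the $X^{3/5}$ threshold really enters. After that, the circle method for $d_{4}^{\sharp}$ is a short-divisor (Type~I) computation: the major arcs reduce to equidistribution of $[X,X+H]^{\ast}$ in arithmetic progressions (the Dartyge--Mauduit input in Lemma~\ref{Mau1}) and the minor arcs to a Vinogradov-style Type~I inverse theorem (Proposition~\ref{Prop4.4}). The balanced Type~II case you flag as ``the main obstacle'' simply never arises.

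The concrete gap in your proposal is precisely that balanced case $E\asymp F\asymp X^{1/2}$. Cauchy--Schwarz in one variable plus the Parseval count for $e_{1}f_{1}=e_{2}f_{2}$ gives at best a pointwise bound of order $E\sqrt{F}\,(\log X)^{O(1)}\asymp X^{3/4+o(1)}$ for $B_{E,F}(\alpha)$ on the minor arcs (the diagonal alone contributes this much), and integrating this against $|S^{\ast}(\alpha)|$ using the $L^{1}$ bound of Lemma~\ref{may} requires roughly $X^{3/4}\ll |[X,X+H]^{\ast}|^{1-o(1)}\approx H^{1-o(1)}$, i.e.\ $H\gg X^{3/4+\varepsilon}$, well short of the claimed $H\gg X^{3/5+\varepsilon}$. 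You also attribute the minor-arc bound for $S^{\ast}$ to \cite{MSTT1}, but that reference contains no estimate for the restricted-digit exponential sum; the bound used in the paper is Maynard's ($L^{1}$, via Lemma~\ref{may}), not a pointwise decay. \cite{MSTT1} enters the paper only through the approximant estimate for $d_{4}-d_{4}^{\sharp}$, a hard theorem that you would essentially need to reprove (in bilinear form) to close the $E\asymp F$ case, and your sketch does not explain how. Without importing that input, the balanced Type~II regime does not close at $H\ge X^{3/5+\varepsilon}$, so the proof as proposed does not establish the theorem in the stated range.
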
 
By applying Theorem 1.1 under the given assumption, we prove the following result:  
\begin{Th}\label{Theorem1.2}  
Let \( X^{2/3+\varepsilon} \ll H \ll X^{1-\varepsilon} \), and let \( g \) be sufficiently large depending on \( \varepsilon \). Assume the stated assumption holds. Then almost all \( 2n \) in \( [X, X+H]^{\ast} \) are Goldbach numbers.  
\end{Th}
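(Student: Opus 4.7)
The plan is to adapt the second-moment circle-method argument of \cite{PP}. Let
$$S(\alpha) = \sum_{X/2 < p \leq X} (\log p)\, e(p\alpha), \qquad R(2n) = \sum_{\substack{p_{1}+p_{2}=2n \\ X/2 < p_{i} \leq X}} (\log p_{1})(\log p_{2}),$$
and let $A(2n) = \mathfrak{S}(2n) \cdot 2n$ be the expected main term, so that $A(2n) \gg X$ whenever $2n$ is even. By Chebyshev's inequality, it suffices to establish the second-moment bound
$$J := \sum_{2n \in [X, X+H]^{\ast}} \bigl|R(2n) - A(2n)\bigr|^{2} \ll |[X,X+H]^{\ast}|\, X^{2} (\log X)^{-C}$$
for a sufficiently large $C$. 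Writing $F(\alpha) = S(\alpha)^{2} - \tilde{A}(\alpha)$, with $\tilde{A}$ the Fourier transform of the main term, and $W(\gamma) = \sum_{2n \in [X,X+H]^{\ast}} e(-2n\gamma)$, expanding the square and interchanging summation yields
$$J = \iint_{[0,1]^{2}} F(\alpha)\, \overline{F(\beta)}\, W(\alpha - \beta)\, d\alpha\, d\beta.$$

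I then split the unit interval into major arcs $\mathfrak{M}$ (rationals $a/q$ with $q \leq (\log X)^{B}$ together with intervals of width $\leq X^{-1}(\log X)^{B}$ around them) and minor arcs $\mathfrak{m}$. On $\mathfrak{M}$, the assumed zero-free region yields, through the standard explicit formula, a bound $|S(\alpha) - A(\alpha)| \ll X^{1-c_{2}/2}$, so $|F(\alpha)| \ll X^{2-c_{2}/2+o(1)}$ pointwise on $\mathfrak{M}$. Coupled with the trivial $|W| \leq |[X,X+H]^{\ast}|$ and $|\mathfrak{M}| \ll X^{-1+o(1)}$, this gives
$$J_{\mathfrak{M}} \ll |[X,X+H]^{\ast}|\, X^{2-c_{2}},$$
an acceptable contribution.

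The heart of the proof is the minor-arc estimate. Here one bounds
$$J_{\mathfrak{m}} \leq \iint_{\mathfrak{m} \times \mathfrak{m}} |S(\alpha)|^{2} |S(\beta)|^{2} |W(\alpha-\beta)|\, d\alpha\, d\beta$$
by combining two ingredients. The first is the Fourier decay of $W$: the exponential-sum estimates for digit-restricted sets in \cite{MSTT1} show that $|W(\gamma)|$ is substantially smaller than its trivial bound whenever $\gamma$ lies at moderate distance from rationals with small denominator, yielding the required saving on the off-diagonal contribution $\alpha \not\approx \beta$. The second is a Heath-Brown decomposition of $\Lambda$ into Type I and Type II bilinear forms; the Type II pieces, which govern the near-diagonal $\alpha \approx \beta$ contribution, are controlled by a fourth-moment estimate whose digit-restricted analogue is exactly Theorem \ref{Theorem1.1}, via the inequality $d_{4}(mn) \geq d_{2}(m)d_{2}(n)$ and Cauchy-Schwarz, mirroring the use of the $d_{2}$-estimate in \cite{kimdiv} and of the classical $\sum_{n \leq X} d_{4}(n) \ll X \log^{3} X$ in \cite{PP}.

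The main obstacle is precisely this minor-arc estimate: one must execute the full Perelli-Pintz Type I/II machinery against the sparse set $[X,X+H]^{\ast}$ of size $\asymp H^{\log(g-1)/\log g}$ rather than against the dense interval of length $H$, and the Fourier decay of $W$ provided by \cite{MSTT1} must be strong enough to compensate for this sparsity. Balancing these two effects is what forces the restriction $H \gg X^{2/3+\varepsilon}$, one exponent weaker than the classical $X^{1/3+\varepsilon}$ of \cite{PP}, and Theorem \ref{Theorem1.1} is the new input that makes the Type II bound close in this restricted-digit setting.
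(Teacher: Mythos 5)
Your proposal follows the circle-method skeleton, but it departs from the paper in several places, and at least two of those departures are genuine gaps rather than harmless variants.

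First, the choice of generating function is wrong for the short-interval problem. You set $S(\alpha)=\sum_{X/2<p\leq X}(\log p)e(p\alpha)$, a single symmetric sum over a dyadic range of length $\asymp X$. Perelli--Pintz, and this paper, instead use the asymmetric pair $S_{1}(\alpha)=\sum_{n\in (X-H,X]}\Lambda(n)e(n\alpha)$, $S_{2}(\alpha)=\sum_{n\in (0,H]}\Lambda(n)e(n\alpha)$, both over intervals of length $H$. This is not cosmetic. The minor-arc sup bound supplied by the assumed zero-free region (Lemma \ref{Lemma1.6}) has the shape $\sup_{\mathfrak{m}}|S_{i}(\alpha)|\ll HX^{-c_{3}\varepsilon}$, a saving of a \emph{small} power of $X$ over the trivial bound, which is the length of the sum. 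With your symmetric $S$ the trivial bound is $\asymp X$, the saving is only $X^{-c_{3}\varepsilon}$, and the resulting exponents do not close: whether you try the first-moment bound $\left(\sup_{\mathfrak{m}}|S|\right)^{2}\|W\|_{1}$ or the Gallagher second-moment bound $\|F\|_{L^{2}(\mathfrak{m})}^{2}\|W\|_{1}$, you end up needing $c_{3}\gtrsim 1/\varepsilon$, which the assumption does not provide. With the asymmetric pair, Parseval gives $\int_{0}^{1}|S_{2}|^{2}\ll H\log X$ rather than $X\log X$, recovering the extra factor $X/H$ that makes the bound close. Relatedly, your main term $A(2n)=\mathfrak{S}(2n)\cdot 2n$ is not the right normalization: for $2n\in [X,X+H]$ and both primes restricted to $(X/2,X]$, the number of admissible pairs is $\asymp 2n-X\ll H$, so the main term is $\asymp H$, not $\asymp X$, and the Chebyshev threshold and the target for $J$ should be scaled by $H^{2}$, not $X^{2}$.

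Second, the role you assign to Theorem \ref{Theorem1.1} is misplaced. In the paper the $d_{4}$ bound over $[X,X+H]^{\ast}$ is used on the \emph{major} arcs, to control the tail of the singular series: one needs $\sum_{2n\in [X,X+H]^{\ast}}\left|\sum_{q>Q}\mu(q)^{2}\phi(q)^{-2}c_{q}(-2n)\right|^{2}$, and after the classical reduction this is bounded by $Q^{-2}\log Q\sum_{j\in [X,X+H]^{\ast}}d_{2}(j)^{2}\leq Q^{-2}\log Q\sum_{j\in [X,X+H]^{\ast}}d_{4}(j)$, where Theorem \ref{Theorem1.1} supplies the needed almost-sharp bound over the sparse set. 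No Heath-Brown or Vaughan decomposition and no Type II bilinear estimate appear anywhere in the proof of Theorem \ref{Theorem1.2}; the minor arcs are handled purely by the pointwise bound of Lemma \ref{Lemma1.6} (which follows from the assumed zero-free region together with the short-interval exponential-sum machinery in \cite{MS1},\cite{MRSTT2}) combined with the $L^{1}$ Fourier bound of Lemma \ref{may} for the restricted-digit set. The Fourier information on $W$ that is used is thus the Maynard-type $L^{1}$ estimate, not pointwise decay, and it comes from the missing-digit literature (Maynard \cite{Jmay}), not from \cite{MSTT1}.

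Third, the paper does not run a second-moment/Gallagher argument at all: it bounds the sum over $2n\in [X,X+H]^{\ast}$ of the minor-arc contribution directly by $\int_{\mathfrak{m}}|S_{1}S_{2}|\,|W|\,d\alpha\ll H^{2}X^{-2c\varepsilon}H^{\log(\log g+1)/\log(g-1)}$, an $L^{1}$ argument, while the cross terms and remainder terms on the major arcs are handled by Lemma \ref{Lemma1.5}. Your second-moment route is a legitimate alternative \emph{once} the asymmetric $S_{1},S_{2}$ and the correct $H$-normalization are in place, but as written it neither matches the paper's proof nor closes on its own terms.
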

For the long interval version, see \cite{cumberbatch}.

\subsection{Setting for Theorem 1.2}
 From now on, we assume that \( \varepsilon > 0 \) is a sufficiently small constant.
Let \( I_{1}=(X-H,X] \), \( I_{2}= (0,H] \), and let \( Q=\delta^{-1} \), where \( \delta= X^{-\varepsilon} \). Define  
\[
\begin{gathered}
R^*(2n)=R^*(2n, X, H)=\sum_{\substack{h+k=2n \\ h \in I_{1} \\ k \in I_{2}}} \Lambda(h) \Lambda(k), \\
M^*(2n)=M^*(2n, X, H)=\sum_{\substack{n+k=2n \\ n \in I_{1} \\ k \in I_{2}}} 1, \\
S_1(\alpha)=\sum_{n \in I_{1}} \Lambda(n) e(n \alpha), \quad S_2(\alpha)=\sum_{n \in I_{2}} \Lambda(n) e(n \alpha), \quad e(\alpha)=e^{2 \pi i \alpha},
\end{gathered}
\]
and let 
\[
\begin{aligned}
&T_1(\eta)=\sum_{n \in I_{1}} e(n \eta), \quad T_2(\eta)=\sum_{n \in I_{2}} e(n \eta), \\
&R_i(\eta, q, a)=S_i\left(\frac{a}{q}+\eta\right)-\frac{\mu(q)}{\phi(q)} T_i(\eta), \quad i=1,2, \\
&W_1(\chi, \eta)=\sum_{n \in I_{1}} \Lambda(n) \chi(n) e(n \eta)-\delta_\chi T_1(\eta), \\
&W_2(\chi, \eta)=\sum_{n \in I_{2}} \Lambda(n) \chi(n) e(n \eta)-\delta_\chi T_2(\eta), \quad \text{where } \delta_\chi=\begin{cases} 
1 & \text{if } \chi=\chi_0, \\ 
0 & \text{if } \chi \neq \chi_0,
\end{cases}
\end{aligned}
\]
\[
\sum_{a (q)}:=\sum_{a=1}^{q} \quad, \quad \sum_{a (q)}^{\ast}:=\sum_{\substack{a=1 \\ (a, q)=1}}^q.
\]

Also, \( \| \cdot \| \) will denote the distance to the nearest integer. Also we use the following notations:
\[
\begin{aligned}
c_q(m)&=\sum_{a(q)}^{\ast} e\left(\frac{m a}{q}\right), \quad \text{Ramanujan's sum}, \\
\tau(\chi)&=\sum_{a (q)}^{\ast}  \chi(a) e\left(\frac{a}{q}\right), \quad \text{Gauss's sum}, \\
\psi(x, \chi)&=\sum_{n \leqslant x} \Lambda(n) \chi(n), \\
N(\sigma, T, \chi)&=\left|\{\rho=\beta+i \gamma: L(\rho, \chi)=0, \beta \geqslant \sigma \text{ and }|\gamma| \leqslant T\}\right|, \\
N(\sigma, T, q)&= \sum_{\chi (\text{mod } q)} N(\sigma, T, \chi).
\end{aligned}
\]

\subsection{Lemmas}
Set $H=X^{2/3+\varepsilon}.$
By using the standard Hardy-Littlewood circle method,  we have 
$$
\sum_{k+m=2 n \atop k \in I_{1}, m \in I_{2}} \Lambda(k) \Lambda(m) =\int_{1 / Q}^{1+1 / Q} S_1(\alpha) S_2(\alpha) e(-2 n \alpha) d \alpha
$$

We divide up the interval $[1 / Q, 1+1 / Q]$ into Farey arcs of order $Q$, writing $I_{q, r}$ for the arc with centre at $r / q$. Thus

$$
I_{q, r} \subset\left[\frac{r}{q}-\frac{1}{q Q}, \frac{r}{q}+\frac{1}{q Q}\right]
$$

for $q \leq Q, 1 \leq r \leq q,(r, q)=1$. Let

$$
I_{q, r}^{\prime}=\left[\frac{r}{q}-\beta(\delta), \frac{r}{q}+\beta(\delta) \right], 
$$
where $D(\delta):=(\log X)^{-3^{6}-1}, \beta(\delta):=\frac{1}{H\delta D(\delta)}. $

The major and minor arcs are defined by

$$
\mathfrak{M}=\bigcup_{q \leq Q} \bigcup_{r=1 \atop (r,q)=1}^q I_{q, r}^{\prime}, \quad \mathfrak{m}=[1 / Q, 1+1 / Q] \backslash \mathfrak{M},
$$

respectively.

First, we need the follwing zero density estimates for the Dirichlet L-functions.

\begin{Lemma}\cite{LMC} For $1 \geq \sigma \geq 4/5, T \gg 1, $ we have 
\begin{equation}
N(\sigma, T, q) \ll_{\varepsilon}\left(qT\right)^{(2+\varepsilon)(1-\sigma)+o(1)}.
\end{equation}
\end{Lemma}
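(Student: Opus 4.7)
The plan is to follow the classical Bombieri–Montgomery zero-detection strategy, adapted to the hybrid $(q,T)$ setting. Given a character $\chi \bmod q$ and a zero $\rho = \beta + i\gamma$ of $L(s,\chi)$ with $\beta \geq \sigma$ and $|\gamma| \leq T$, one attaches the mollifier
\[
M_Y(s,\chi) = \sum_{n \leq Y} \mu(n) \chi(n) n^{-s},
\]
which partially approximates $1/L(s,\chi)$. A Perron-formula / contour-shift argument exploits $L(\rho,\chi) = 0$ to force at least one of two Dirichlet polynomials to be $\gg 1$ at $(\chi,\gamma)$: a short one of length about $Y$ built from the coefficients of $L M_Y$, or a longer smoothed one of length about $Y^K$ for a suitable integer $K$. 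I would take $Y$ to be a small power of $qT$, to be optimized at the end.

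After extracting a well-spaced subset of the zeros in the $\gamma$-variable (costing only a $\log$ factor), the task reduces to bounding the number $R(V,N)$ of well-spaced pairs $(\chi_j, t_j)$ at which a Dirichlet polynomial
\[
D(\chi, t) = \sum_{n \sim N} a_n \chi(n) n^{-it}, \qquad \|a\|_2^2 \ll N^{1-2\sigma+\varepsilon},
\]
exceeds a threshold $V$. I would combine the hybrid large sieve
\[
\sum_{\chi \bmod q}\; \int_{-T}^{T} |D(\chi,t)|^2\, dt \;\ll\; (qT + N)\, \|a\|_2^2
\]
with its fourth-moment variant, obtained by squaring $D$ into a polynomial of length $\sim N^2$ and invoking the divisor bound for the coefficient convolution. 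This yields a Huxley–Jutila type estimate of the form
\[
R(V,N) \;\ll\; \bigl(N V^{-2} + N^2(qT)\, V^{-6}\bigr)\, (qTN)^{o(1)}.
\]
Substituting back into the zero-detection setup and summing over dyadic ranges of $N$, the two terms balance precisely when $\sigma \geq 4/5$, at which point the resulting exponent in $qT$ is $(2+\varepsilon)(1-\sigma)$.

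The main obstacle will be the book-keeping needed to produce a genuinely hybrid bound of shape $(qT)^{(2+\varepsilon)(1-\sigma)+o(1)}$ rather than split factors $q^{\alpha} T^{\beta}$ with unequal exponents: the large sieve is naturally hybrid, but the mollifier length $Y$, the dyadic ranges of polynomial length $N$, and the threshold $V$ must each be chosen so that the optimization is carried out against the single combined variable $qT$. In the range $\sigma < 4/5$ the sixth-moment term becomes dominant and the exponent deteriorates, which is why the lemma is restricted to $\sigma \geq 4/5$. Since the result is cited from \cite{LMC}, I would ultimately verify that the exponent produced by the optimization matches $(2+\varepsilon)(1-\sigma)+o(1)$ and defer the precise constants to that reference.
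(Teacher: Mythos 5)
The paper offers no proof of this lemma: it is imported verbatim from the reference \cite{LMC}, so there is no internal argument to compare against. Evaluating your sketch on its own merits, the general framework (mollified zero detection, hybrid large sieve, large‑values inequality, dyadic optimization over polynomial length) is the right one, but two concrete steps do not work as you describe, and these are precisely the steps that determine the numerology $\sigma\geq 4/5$ and exponent $(2+\varepsilon)(1-\sigma)$.

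First, you claim the $V^{-6}$ term in your large‑values bound is ``obtained by squaring $D$ into a polynomial of length $\sim N^2$ and invoking the divisor bound.'' Squaring $D$ and applying the hybrid mean‑value large sieve to $D^2$ with threshold $V^2$ yields a bound of the shape $R\ll (qT+N^2)\|c\|_2^2 V^{-4}$, i.e.\ a $V^{-4}$ dependence, not $V^{-6}$. The $V^{-6}$ in Huxley's large‑values theorem does not arise from squaring at all; it comes from the Hal\'asz--Montgomery duality argument (choose a well‑spaced subset of large values, dualize the bilinear form, estimate the operator norm), which is a structurally different and considerably more delicate step. The estimate you wrote down is therefore not produced by the method you invoke for it.

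Second, the assertion that ``the two terms balance precisely when $\sigma\geq 4/5$'' is not what the classical optimization gives. The Hal\'asz--Huxley large‑values route you describe delivers the density‑hypothesis exponent $2(1-\sigma)+\varepsilon$ only for $\sigma\geq 5/6$, not $4/5$; lowering the threshold to $4/5$ (or further, as in Jutila's $11/14$) requires an additional input beyond the second and fourth moments --- typically a hybrid subconvexity bound for $L(\tfrac12+it,\chi)$, or a sharper large‑values inequality --- which your sketch does not identify. Since the content of the lemma \emph{is} the specific pair (range $\sigma\geq 4/5$, exponent $(2+\varepsilon)(1-\sigma)$), leaving the source of the $4/5$ threshold unexplained is a genuine gap rather than a deferral of constants. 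You are candid in ultimately pointing to \cite{LMC}, which is appropriate for a cited result, but as a reconstruction the argument as stated would only reach $\sigma\geq 5/6$.
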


By following the standard argument, we can estimate upper bounds for \( W_{i}(\chi, \eta) \).

\begin{Lemma} Let $\chi$ be a Dirichlet character and let $T \gg 1.$ Then
\begin{equation}   
\sum_{n \in I_{1}} \Lambda(n) \chi(n) = \delta_{\chi} H - \sum_{\rho = \beta+ i\gamma \atop |\gamma|<T} \frac{X^{\rho} - (X-H)^{\rho}}{\rho} + O\left(X (\log qX)^{2} T^{-1}\right),  
\end{equation}  
where the summation runs over the nontrivial zeros \( \rho \) of \( L(s, \chi) \).  
\end{Lemma}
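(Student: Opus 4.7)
Since $I_1=(X-H,X]$, the sum in question equals $\psi(X,\chi)-\psi(X-H,\chi)$, where $\psi(x,\chi)=\sum_{n\le x}\Lambda(n)\chi(n)$. My plan is therefore to establish, uniformly for $x\in[X-H,X]$, the standard truncated explicit formula
\[
\psi(x,\chi)=\delta_\chi x-\sum_{\substack{\rho=\beta+i\gamma\\ |\gamma|<T}}\frac{x^\rho}{\rho}+O\bigl(x(\log qx)^2T^{-1}\bigr),
\]
and then subtract the two instances $x=X$ and $x=X-H$. The main term $\delta_\chi x$ collapses to $\delta_\chi H$, the zero sums combine into $\sum_{|\gamma|<T}\frac{X^\rho-(X-H)^\rho}{\rho}$, and the two error terms merge into the stated $O(X(\log qX)^2T^{-1})$.

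To prove the explicit formula I would follow the textbook argument: apply the truncated Perron formula to $-L'(s,\chi)/L(s,\chi)=\sum_n\Lambda(n)\chi(n)n^{-s}$, obtaining
\[
\psi(x,\chi)=-\frac{1}{2\pi i}\int_{c-iT}^{c+iT}\frac{L'(s,\chi)}{L(s,\chi)}\frac{x^s}{s}\,ds+O\bigl(x(\log qx)^2T^{-1}\bigr),
\]
with $c=1+1/\log x$. I would then shift the contour leftward to a line $\Re(s)=-U$ with $U$ large, collecting residues at the pole $s=1$ of $-L'/L$ (present only when $\chi=\chi_0$ and yielding the main term $\delta_\chi x$), at every nontrivial zero $\rho$ with $|\gamma|<T$ (yielding $-x^\rho/\rho$), and at $s=0$ together with the trivial zeros and the extra Euler-factor zeros in the imprimitive case (contributing only $O(\log qx)$, harmlessly absorbed into the error term).

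The principal technical step, and the main obstacle, is controlling the horizontal segments of the contour at $t=\pm T$. This rests on the classical estimate $|L'(s,\chi)/L(s,\chi)|\ll(\log qT)^2$ valid for $-1\le\sigma\le 2$ provided $t=\pm T$ stays at distance $\gg 1/\log qT$ from every zero of $L(s,\chi)$; one arranges this by perturbing $T$ by at most $O(1)$, which does not affect the statement. The resulting bound on each horizontal integral is $O(x(\log qT)^2/T)$, and the left vertical segment is negligible as $U\to\infty$ since $L'/L(s,\chi)$ grows only like $\log q(|s|+2)$ far to the left of the critical strip. Combining these pieces yields the explicit formula for $\psi(x,\chi)$, and subtracting the two instances produces the lemma.
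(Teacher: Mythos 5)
Your proposal is correct and matches the paper exactly in spirit: the paper's entire proof of this lemma is the citation ``See \cite[Chapter~7]{Harman}'', and what you have written out is precisely the standard truncated explicit formula for $\psi(x,\chi)$ (Perron, contour shift to $\Re(s)=-U$, residues at $s=1$, the nontrivial zeros, $s=0$ and the trivial zeros, with the horizontal segments controlled by perturbing $T$ to stay $\gg 1/\log qT$ from zeros) applied at $x=X$ and $x=X-H$ and subtracted. The only point worth being slightly careful about, which your plan gestures at but glosses, is the imprimitive case: one typically passes from $\chi$ to its inducing primitive character $\chi^*$, proves the formula for $\chi^*$, and then absorbs $|\psi(x,\chi)-\psi(x,\chi^*)|\ll(\log qx)^2$ into the error term, rather than running the contour argument directly with the imprimitive $L(s,\chi)$ and its extra Euler-factor zeros on $\Re(s)=0$ — but either route lands inside the stated $O(X(\log qX)^2T^{-1})$.
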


\begin{proof}
See \cite[Chapter 7]{Harman}.
\end{proof}
Note that 
$$
R_{i}(\eta, q, a)=\frac{1}{\phi(q)} \sum_\chi \chi(a) \tau(\bar{\chi}) W_{i}(\chi, \eta)+O(\sqrt{H}).$$
Therefore, by using the above lemmas, we can obtain the following bounds, which will be applied to some parts of the major arcs.
\begin{Lemma}\label{Lemma1.5} Assume the stated assumption holds. Then
\begin{equation}\label{1.5}\begin{split}
&\sum_{q \leq Q} \sum_{ a (q)}^{\ast} \frac{1}{\phi(q)} \int_{|\eta|<\beta(\delta)}\left(T_1(\eta) R_2(\eta, q, a)+T_2(\eta) R_1(\eta, q, a)\right) e(-2 n \eta) d \eta = o(H),
\\&\sum_{q \leq Q} \sum_{ a (q)}^{\ast}  \int_{|\eta|<\beta(\delta)} R_1(\eta, q, a) R_2(\eta, q, a) e(-2 n \eta) d \eta = o(H).
\end{split}\end{equation}
\end{Lemma}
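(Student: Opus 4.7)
The plan is to expand $R_i$ via the identity stated just before the lemma, namely $R_i(\eta, q, a) = \frac{1}{\phi(q)} \sum_\chi \chi(a)\tau(\bar\chi) W_i(\chi, \eta) + O(\sqrt{H})$, and execute the $a$-summation by orthogonality of Dirichlet characters. In the first integral of \eqref{1.5}, since $T_i(\eta)$ is independent of $a$, the identity $\sum_{a(q)}^*\chi(a) = \phi(q)\delta_{\chi=\chi_0}$ collapses the character sum to $\chi=\chi_0$, reducing matters to
\[
\sum_{q \le Q} \frac{\mu(q)}{\phi(q)} \int_{|\eta|<\beta(\delta)} \bigl(T_1(\eta) W_2(\chi_0, \eta) + T_2(\eta) W_1(\chi_0, \eta)\bigr) e(-2n\eta)\, d\eta
\]
plus a negligible error from the $O(\sqrt{H})$ term. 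For the second integral, expanding $R_1 R_2$ as a double character sum and using the orthogonality $\sum_{a(q)}^*\chi_1(a)\chi_2(a) = \phi(q)\delta_{\chi_2 = \bar\chi_1}$ diagonalizes the character sum; bounding $|\tau(\chi)\tau(\bar\chi)| \le q$ and applying Cauchy-Schwarz in $\chi$ leaves a sum controlled by $\sum_{q\le Q} \frac{q}{\phi(q)} \sum_\chi \int_{|\eta|<\beta(\delta)} |W_i(\chi, \eta)|^2\, d\eta$ for $i = 1, 2$.

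Applying Cauchy-Schwarz in $\eta$, and using $\int |T_i|^2\, d\eta \le H$ via Parseval for the first integral, both quantities are controlled by the short-window mean square
\[
\mathcal{J}_i := \sum_{q \le Q} \frac{q}{\phi(q)} \sum_\chi \int_{|\eta| < \beta(\delta)} |W_i(\chi, \eta)|^2\, d\eta,
\]
and it suffices to show $\mathcal{J}_i = o(H^2)$. I intend to achieve this via Gallagher's lemma: with $h = 1/(2\beta(\delta)) \asymp H\delta D(\delta)$, one has
\[
\int_{|\eta|<\beta(\delta)} |W_i(\chi,\eta)|^2\,d\eta \ll \beta(\delta)^2 \int_{\mathbb{R}} \Bigl|\sum_{u < n \le u+h}\bigl(\Lambda(n)\chi(n) - \delta_\chi\bigr)\Bigr|^2 du,
\]
where the integrand is a short-interval variance of $\psi(\cdot, \chi)$. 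Estimating this variance via Lemma 1.4 with a truncation height $T = X^A$, the zero-free region assumption forces every nontrivial zero of $L(s,\chi)$ to satisfy $\beta \le 1-c_2$, so each one contributes $O(X^{1-c_2}/|\rho|)$; summing over zeros via Lemma 1.3 gives $\sum_{\chi\bmod q} N(1-c_2, T, \chi) \ll (qT)^{(2+\varepsilon)c_2+o(1)}$, yielding an average saving of a positive power of $X$. Combining these estimates and summing over $q \le Q = X^\varepsilon$ produces $\mathcal{J}_i = o(H^2)$, as required.

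The principal obstacle is numerological: one must balance the three parameters $c_2$ (zero-free region), $\beta(\delta)$ (major-arc length), and the explicit-formula truncation $T$ so that the combined saving genuinely beats $H$. The choice $D(\delta) = (\log X)^{-3^{6}-1}$ ensures the Gallagher short sum has length $h \asymp X^{2/3}/(\log X)^{3^{6}+1}$, which under the zero-free region is large enough for the mean-square short-interval prime number theorem to save a power of $X$ over the trivial bound, while remaining short enough that the $T^{-1}$ error in Lemma 1.4 is absorbed for a moderate choice of $T$. Tracking these logarithmic powers and verifying that the final exponent closes to $o(H)$ for each input constant $c_2 > 0$ is the technical bulk of the proof, but since $H = X^{2/3+\varepsilon}$ demands only any polynomial saving over the trivial bound, even a small $c_2$ suffices.
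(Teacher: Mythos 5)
Your proof is correct and reaches the same conclusion, but it takes a genuinely different route from the paper's. You use character orthogonality in the $a$-sum (collapsing the first cross term to $\chi_0$ and diagonalizing the second), then invoke Gallagher's lemma to convert $\int_{|\eta|<\beta(\delta)}|W_i(\chi,\eta)|^2\,d\eta$ into a short-interval mean square of $\psi(\cdot,\chi)$, and finish with the explicit formula plus the zero-density bound. This is essentially the classical Perelli--Pintz mechanism, and it is arguably the cleaner argument. The paper instead avoids Gallagher entirely: it first claims via partial summation that $|W_i(\chi,\eta)|\ll H\beta(\delta)\,|W_i(\chi,0)|$ uniformly for $|\eta|<\beta(\delta)$ (strictly speaking, the right-hand side should involve $\max_t$ over partial sums of $\Lambda(n)\chi(n)-\delta_\chi$ rather than $W_i(\chi,0)$, but this makes no difference to the eventual bound), pulls out the uniform-in-$a$ estimate $|R_i(\eta,q,a)|\ll\frac{H\beta(\delta)}{q^{1/2}}\sum_\chi|W_i(\chi,0)|+O(\sqrt H)$, and then Cauchy--Schwarz's directly, applying the explicit formula to $\sum_\chi|W_i(\chi,0)|^2$ at the single point $\eta=0$. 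The paper's route is more elementary (no Gallagher, no $\eta$-averaged mean-square), at the cost of being slightly less sharp; your route is more in line with the standard template for major-arc error estimates in short-interval Goldbach problems. Both are powered by the same inputs: the assumed zero-free region of width $c_2$ and the log-free density estimate $N(1-c_2,T,q)\ll(qT)^{(2+\varepsilon)c_2+o(1)}$, and both close because $H=X^{2/3+\varepsilon}$ only demands a small power saving.

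One caution about your orthogonality step. Because the paper's Lemma~\ref{Lemma1.5} is actually invoked in Section~1.3 against the quantity with an additional phase $e(-2na/q)$ in the $a$-sum, the bound one ultimately needs is the \emph{absolute-value} version $\sum_q\sum_{a(q)}^*\frac{1}{\phi(q)}\big|\int(\cdots)\big|=o(H)$. The paper's proof yields exactly that, since its bound on $|R_i(\eta,q,a)|$ is uniform in $a$. Your orthogonality collapse $\sum_{a(q)}^*\chi(a)=\phi(q)\delta_{\chi=\chi_0}$ exploits cancellation over $a$ that is destroyed once $e(-2na/q)$ is inserted (the $a$-sum then produces $\bar\chi(-2n)\tau(\chi)$, not a Kronecker delta), so as written your argument proves the lemma as literally stated but does not directly transfer to the application. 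This is easily repaired: drop the collapse, bound $|R_i(\eta,q,a)|$ pointwise and uniformly in $a$ before summing, and run your Gallagher estimate as before. With that adjustment your approach delivers everything the paper needs.
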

\begin{proof}
By applying summation by parts, it is easy to see that when $|\eta|\leq \beta(\delta),$ 
$|W_{i}(\chi,\eta)|\ll H\beta(\delta) |W_{i}(\chi,0)|.$
Therefore, by using the bound $|\tau(\chi)|\leq q^{1/2},$ we have  $$\left|R_{i}(\eta,q,a)\right|\ll \frac{H\beta(\delta)}{q^{1/2}}\sum_{\chi} |W_{i}(\chi,0)| + O(\sqrt{H}).$$
Therefore,
$$\int_{|\eta|<\beta(\delta)} \left|R_{i}(\eta,q,a)\right|^{2} d\eta \ll \beta(\delta)\left(\frac{H^{2}\beta(\delta)^{2}}{q}\sum_{\chi} |W_{i}(\chi,0)|^{2} + H\right). $$
Note that $\beta(\delta)=\frac{(\log X)^{3^{6}+1}}{H\delta}.$
Therefore, the left-hand side of the first equation in \eqref{1.5} is bounded by 
\begin{equation}\begin{split}&\sum_{q \leq Q} \sum_{ a (q)}^{\ast} \frac{1}{\phi(q)q^{1/2}} H^{1/2} \beta(\delta)^{1/2}\left( H^{2}\beta(\delta)^{2}\sum_{\chi} |W_{i}(\chi,0)|^{2} + H\right)^{1/2}
\\& \quad \ll H^{3/2}\beta(\delta)^{3/2} \log Q \left(\sum_{q \leq Q} \left(\sum_{\chi} |W_{i}(\chi,0)|^{2} + \frac{1}{H\beta(\delta)^{2}}\right)\right)^{1/2}.
\end{split}\end{equation}
By following the argument in \cite[Chapter 7]{Harman}, setting $T= \frac{qX(\log X)^{2}}{X^{3/5}},$ with the assumption, we have 
\begin{equation}\begin{split} & \sum_{\chi} |W_{i}(\chi,0)|^{2}\ll H^{2}\sum_{\rho}X^{2\sigma-2} \left(\sum_{\chi (\textrm{mod }q)} N(\sigma,T,\chi) \right)
\\& \ll \left(\frac{H}{X}\right)^{2} \left(X^{2-2c_{2}}(qT)^{2c_{2}+o(1)}\right) 
\\& \ll H^{2}q^{5\varepsilon}X^{-1.25c_{2}+o(1)}.
\end{split}\end{equation}
Therefore, the left hand-side of $\eqref{1.5}$ is bounded by 
$$\delta^{-3/2} Q \log Q H X^{-0.625c_{2}+o(1)}.$$
Also, 
$$\int_{|\eta|<\beta(\delta)} \left|R_{i}(\eta,q,a)\right|^{2} d\eta \ll H^{4} \beta(\delta)^{3} q^{5\varepsilon-1}X^{-1.25c_{2}+o(1)} \ll H\delta^{-3} (\log X)^{-3( 3^{6}+1)} X^{-1.25c_{2}+o(1)}. $$
\end{proof}
Now, we show that under the assumption, the exponential sum with weight $\Lambda(n)$ is small when $\alpha \in m.$ Since $H\beta(\delta) \gg 1,$ we can use the following lemma.
\begin{Lemma}\label{Lemma1.6} Let the stated assumption hold, and let $X^{2/3+\varepsilon} \ll H \ll X^{1-\varepsilon}.$ When $\alpha \in m,$
$\max\left(S_{1}(\alpha), S_{2}(\alpha)\right) \ll HX^{-c_{3}\varepsilon}$ for some $c_{3}>0.$
\end{Lemma}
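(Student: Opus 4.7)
The strategy is to carry out, for a single minor-arc point $\alpha$, the same character decomposition and explicit-formula analysis that underlies Lemma \ref{Lemma1.5}, this time also exploiting the lower bound $|\eta|>\beta(\delta)$ that $\alpha\in\mathfrak{m}$ supplies.

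For $\alpha\in\mathfrak{m}$, Dirichlet approximation on the Farey fractions of order $Q$ produces $(a,q)$ with $q\leq Q$, $(a,q)=1$ and $\alpha=a/q+\eta$, $|\eta|\leq 1/(qQ)$; the condition $\alpha\notin\mathfrak{M}$ forces moreover $|\eta|>\beta(\delta)$. Writing $S_i(\alpha)=\frac{\mu(q)}{\phi(q)}T_i(\eta)+R_i(\eta,q,a)$, the first summand satisfies $|T_i(\eta)|/\phi(q)\ll 1/|\eta|\leq H\delta/(\log X)^{3^6+1}\ll HX^{-\varepsilon}$, which is already of the required form. For the remainder, the character expansion together with the Gauss sum bound $|\tau(\bar\chi)|\leq q^{1/2}$ reduces matters to proving a uniform estimate $|W_i(\chi,\eta)|\ll HX^{-c'\varepsilon}$ for $\chi$ modulo $q$, with some $c'>0$ depending on $c_2$.

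To establish such a bound I invoke the truncated explicit formula of the preceding lemma together with partial summation against the kernel $e(u\eta)$, writing
\[
W_i(\chi,\eta)=-\sum_{|\gamma|\leq T}\int_{I_i}u^{\rho-1}e(u\eta)\,du+O\!\left(X(\log qX)^2 T^{-1}(1+H|\eta|)\right)
\]
with $T$ a small power of $X$ chosen so that the error term is $o(HX^{-c'\varepsilon})$. Under the assumption, every nontrivial zero $\rho=\beta+i\gamma$ obeys $\beta\leq 1-c_2$, and since $H|\eta|\gg(\log X)^{3^6+1}/\delta\gg 1$ integration by parts in each integral gives $|\int_{I_i}u^{\rho-1}e(u\eta)\,du|\ll X^{\beta-1}/|\eta|\ll HX^{\beta-1}\delta(\log X)^{-3^6-1}$. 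Summing over zeros by Stieltjes integration against the zero-density estimate (1.2)---precisely the calculation performed in the proof of Lemma \ref{Lemma1.5}---yields $\sum_{|\gamma|\leq T}X^{\beta-1}\ll X^{-c_2}(qT)^{(2+\varepsilon)c_2+o(1)}$, and plugging back produces $|S_i(\alpha)|\ll HX^{-c_3\varepsilon}$ as soon as $\varepsilon$ is taken sufficiently small with respect to $c_2$.

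The principal obstacle is the bookkeeping of exponents: the Gauss-sum factor $q^{1/2}\leq X^{\varepsilon/2}$, the oscillatory factor $1/|\eta|\leq H\delta$, and the zero-density factor $(qT)^{O(c_2)}\leq X^{O(c_2\varepsilon)}$ must all be absorbed by the single power saving $X^{-c_2}$ coming from the zero-free region, leaving a net saving of the shape $X^{-c_3\varepsilon}$ with some effective $c_3>0$. This exponent arithmetic is structurally identical to the estimate $H^2q^{5\varepsilon}X^{-1.25c_2+o(1)}$ derived in the proof of Lemma \ref{Lemma1.5}, only carried out pointwise in $\eta$ rather than on average.
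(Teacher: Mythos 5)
Your sketch has a genuine gap at its central step, and moreover it follows a different route from the paper, which simply defers the whole proof of this lemma to the argument of \cite{MS1} (alternatively \cite[Lemma 3.4]{MRSTT2}). Those references bound $\sum_{n\in I}\Lambda(n)e(n\alpha)$ on the minor arcs by a combinatorial decomposition of $\Lambda$ (Heath--Brown/Vaughan identity) into Type~I and Type~II sums, handled by bilinear and large-sieve techniques and by the inverse-type result for Type~I sums; the zero-free region assumption enters there in a different and more indirect way than in your argument. The paper does not attempt a pointwise explicit-formula estimate on the minor arcs, and for a good reason, explained below.

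The step that fails in your proposal is the claim that, for every nontrivial zero $\rho=\beta+i\gamma$,
\[
\left|\int_{I_i}u^{\rho-1}e(u\eta)\,du\right|\ll \frac{X^{\beta-1}}{|\eta|}.
\]
The phase of the integrand is $\gamma\log u+2\pi u\eta$, whose derivative is $\gamma/u+2\pi\eta$, which vanishes when $\gamma\approx-2\pi u\eta$. For the ``resonating'' zeros with $|\gamma+2\pi X\eta|\lesssim X/H$, integration by parts yields no cancellation whatsoever and the integral is genuinely of size $\asymp HX^{\beta-1}$, not $X^{\beta-1}/|\eta|$. With $H=X^{2/3+\varepsilon}$ the resonance window has length $X/H=X^{1/3-\varepsilon}$, hence contains $\asymp X^{1/3-\varepsilon}\log X$ zeros by the Riemann--von Mangoldt formula. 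You cannot force most of them to have $\beta$ near $1/2$: the zero-density input you cite, Lemma~1.1 of the paper, only controls $N(\sigma,T,q)$ for $\sigma\geq 4/5$, and there is no short-window zero-density estimate available. Taking $\beta$ as large as $4/5$ on the window count gives a contribution of order $HX^{-1/5}\cdot X^{1/3-\varepsilon}\log X=X^{4/5}\log X$, and even restricting to $\beta\geq 4/5$ and using the density bound still yields $\asymp HX^{1/3-c_2}\log X$; in either case this dominates the target $HX^{-c_3\varepsilon}\asymp X^{2/3+\varepsilon(1-c_3)}$ unless $c_2>1/3$, which is far stronger than the paper's hypothesis that $c_2$ is a small fixed positive constant. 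Shrinking $\varepsilon$, as you suggest at the end, does not repair this: the loss $X^{1/3-c_2}$ relative to $H$ is independent of $\varepsilon$. So the explicit-formula route collapses precisely at the lower end $H\asymp X^{2/3+\varepsilon}$ of the range covered by the lemma, and the argument must instead proceed via the combinatorial/Type~I--II machinery of \cite{MS1}, \cite{MRSTT2}.
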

\begin{proof}
The proof comes from the argument in \cite{MS1}. The only differnce is, we use $\delta$ as a power of $X^{-\varepsilon}.$ One may use \cite[Lemma 3.4]{MRSTT2} instead of \cite[Lemma 2.3]{MS1} with the assumption. 

\end{proof}
The following lemma shows that, on average, \[ F_{[X,X+H]}(\alpha) := |[X,X+H]^{\ast}|^{-1} \left| \sum_{n \in [X,X+H]^{\ast}} e(n\alpha) \right|\] is of size  
\[
\left|[X,X+H]^{\ast}\right|^{-1+ \frac{\log \left((\log g)+1\right)}{\log (g-1)}}.
\]  

\begin{Lemma}\label{may}  
Let \( g \) be sufficiently large. Then  
\[
\int_{[0,1]} F_{[X,X+H]}(\alpha) \, d\alpha \ll \left|[X,X+H]^{\ast}\right|^{-1+ \frac{\log \left((\log g)+1\right)}{\log (g-1)}}.
\]  
\end{Lemma}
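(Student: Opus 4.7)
The plan is to exploit the tensor-product structure of the digit set. Since $X$ is a power of $g$ and $b \in \{2,\ldots,g-1\}$, whenever $H = g^k$ the set $[X,X+H]^{*}$ is in bijection with $S^{k}$ via $(a_0,\ldots,a_{k-1}) \mapsto X + \sum_{i=0}^{k-1}a_i g^i$, where $S := \{0,1,\ldots,g-1\}\setminus\{b\}$; the general case reduces to this one by a standard block decomposition. Hence $|[X,X+H]^{*}| = (g-1)^k + O(1)$, and the exponential sum factorizes:
\[
\sum_{n \in [X,X+H]^{*}} e(n\alpha) = e(X\alpha)\prod_{i=0}^{k-1}\phi_g(g^i\alpha) + O(1), \qquad \phi_g(\alpha) := \sum_{a \in S}e(a\alpha).
\]
Noting that $(g-1)^{k\log(\log g + 1)/\log(g-1)} = (\log g + 1)^{k}$, the lemma reduces to showing
\[
J(k) := \int_{0}^{1}\prod_{i=0}^{k-1}|\phi_g(g^i\alpha)|\,d\alpha \leq (\log g + 1)^{k}
\]
for all sufficiently large $g$, which I will prove by induction on $k$.

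For the inductive step, split $[0,1]$ into the intervals $[j/g,(j+1)/g)$, $j = 0, \ldots, g-1$, and substitute $\alpha = (\beta+j)/g$ on each. Since $g^{i-1}j$ is an integer for $i \geq 1$, one has $g^i\alpha \equiv g^{i-1}\beta \pmod{1}$, hence $\phi_g(g^i\alpha) = \phi_g(g^{i-1}\beta)$ for $i \geq 1$. Summing over $j$ and using $d\alpha = d\beta/g$ gives
\[
J(k) = \frac{1}{g}\int_{0}^{1}\left(\sum_{j=0}^{g-1}\bigl|\phi_g\bigl((\beta+j)/g\bigr)\bigr|\right)\prod_{i=0}^{k-2}|\phi_g(g^i\beta)|\,d\beta.
\]
Everything therefore reduces to the uniform pointwise bound
\[
\sum_{j=0}^{g-1}\bigl|\phi_g\bigl((\beta+j)/g\bigr)\bigr| \leq g(\log g + 1) \qquad (\beta \in [0,1])
\]
for sufficiently large $g$, since together with $J(0) = 1$ this yields $J(k) \leq (\log g + 1)J(k-1) \leq (\log g + 1)^k$.

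For the pointwise bound I write $\phi_g(\alpha) = D_g(\alpha) - e(b\alpha)$ with $D_g$ the Dirichlet kernel, so $|\phi_g| \leq |D_g| + 1$ and
\[
\sum_{j=0}^{g-1}|D_g((\beta+j)/g)| = |\sin(\pi\beta)|\sum_{j=0}^{g-1}\csc(\pi(\beta+j)/g).
\]
The elementary inequality $\sin(\pi t) \geq \pi t(1-t)$ for $t \in [0,1]$, equivalent to $\csc(\pi t) \leq 1/(\pi t) + 1/(\pi(1-t))$, bounds the inner sum by $(g/\pi)(1/\beta + 1/(1-\beta) + 2H_{g-1})$; using $|\sin(\pi\beta)|/\beta \leq \pi$, $|\sin(\pi\beta)|/(1-\beta) \leq \pi$, $|\sin(\pi\beta)| \leq 1$, and $H_{g-1} \leq \log g + 1$, the left side is at most $2g + (2g/\pi)(\log g + 1)$. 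Adding $g$ from the $+1$ in $|\phi_g| \leq |D_g|+1$ yields
\[
\sum_{j=0}^{g-1}|\phi_g((\beta+j)/g)| \leq 3g + \frac{2g}{\pi}(\log g + 1) \leq g(\log g + 1)
\]
for $g$ large, the last step using $2/\pi < 1$. The main obstacle is obtaining the \emph{exact} constant $\log g + 1$ rather than $C\log g$ with $C > 1$: the sum is genuinely of size $\asymp g\log g$ (the worst case occurring around $\beta = 1/2$), and the crucial numerical input $2/\pi < 1$ is what leaves enough slack to absorb the lower-order $O(g)$ corrections coming from the $1/\beta$ and $1/(1-\beta)$ singularities and from the $+1$ in $|\phi_g| \leq |D_g|+1$.
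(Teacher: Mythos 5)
The paper states Lemma \ref{may} without proof (it is essentially a variant of Maynard's $L^1$ estimate for the Fourier transform of digit-restricted sets in \cite{Jmay}), so there is no in-paper argument to compare against; your self-contained argument is correct and follows the natural route for such estimates. You factor the exponential sum through the tensor structure of the digit set, peel off the bottom digit via the substitution $\alpha = (\beta+j)/g$ to get the recursion $J(k) \le (\log g + 1)\,J(k-1)$, and close the induction with the pointwise bound $\sum_{j=0}^{g-1}|\phi_g((\beta+j)/g)| \le g(\log g + 1)$ for $g$ large. That pointwise estimate is carried out cleanly: writing $|\phi_g| \le |D_g| + 1$, the Dirichlet-kernel computation $\sum_j |D_g((\beta+j)/g)| = |\sin\pi\beta|\sum_j\csc\big(\pi(\beta+j)/g\big) \le 2g + (2g/\pi)H_{g-1}$ via $\sin\pi t \ge \pi t(1-t)$ leaves, thanks to $2/\pi < 1$, exactly enough slack to absorb the $O(g)$ corrections and land on the sharp constant $\log g + 1$, which matches the exponent $\log(\log g + 1)/\log(g-1)$ in the statement because $|[X,X+H]^{\ast}| \asymp_g (g-1)^{k}$. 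The one compressed step is the reduction from general $H$ to $H = g^k$. You call it a ``standard block decomposition,'' and it is, but it merits a sentence: the decomposition breaks $[X,X+H]^{\ast}$ into $O(\log_g H)$ pieces at scales $j = 0,\dots,k$ whose $L^1$ contributions are $\ll g\,J(j)$, and the essential observation is that $\sum_{j\le k} g\,J(j) \ll_g (\log g+1)^{k}$ because $J(j) \le (\log g+1)^{j}$ is geometric in $j$ and dominated by the top scale; a naive union bound over scales would instead leave a stray $\log X$ factor, which the lemma, with its $g$-dependent but $X$-uniform implied constant, cannot afford.
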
  
To treat the main terms in the major arcs, we need the following lemma. 
\begin{Lemma}  
Let \( l \mid q \), and let \( (am', q) = 1 \). Then  
\[
\sum_{b (q)} \sum_{a(q)}^{\ast} e\left(\frac{a(m'b - n)}{q}\right) \ll d_{2}(q)(n, q).
\]  
\end{Lemma}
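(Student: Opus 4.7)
The plan is to interchange the two summations, recognize the inner sum over $a$ as a Ramanujan sum, and then exploit the hypothesis $l \mid q$ together with M\"obius cancellation to extract the factor $(n, q)$.

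First, for each fixed $b$, the inner sum is the Ramanujan sum
$$
c_{q}(m'b - n) = \sum_{a(q)}^{\ast} e\!\left(\frac{a(m'b-n)}{q}\right),
$$
which satisfies the classical pointwise estimate $|c_{q}(r)| \leq (q, r)$. Expanding via the identity $c_{q}(r) = \sum_{d \mid (q, r)} d\,\mu(q/d)$ and interchanging sums once more, the whole double sum reshapes into
$$
\sum_{d \mid q} d\,\mu(q/d)\, N(d),
$$
where $N(d)$ denotes the number of $b$'s in the outer summation with $d \mid m'b - n$. Since $(m', q) = 1$, $m'$ is invertible modulo every $d \mid q$, so this congruence pins $b$ to a unique residue class modulo $d$; the hypothesis $l \mid q$ then allows $N(d)$ to be evaluated in closed form, up to an $O(1)$ error, as roughly $l/d$.

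Substituting back, the sum separates into a main term -- in which M\"obius cancellation kills every contribution except those coming from divisors $d$ dividing both $n$ and $q$, thereby producing the factor $(n, q)$ -- and an error term whose length is controlled by the number of squarefree divisors of $q$, hence by $d_{2}(q)$. Combining the two estimates gives the bound $O(d_{2}(q)(n, q))$.

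The main obstacle is that a crude triangle-inequality estimate, which replaces $c_{q}(m'b - n)$ by $(q, m'b - n)$ and sums over $b$, loses the essential M\"obius cancellation and yields a much weaker bound of size $q$ rather than $(n, q)$. The delicate point is therefore to keep the weight $\mu(q/d)$ in play throughout the argument and use it to cancel contributions from divisors $d$ that do not divide $n$; once this cancellation is correctly accounted for, the divisor bound $d_{2}(q)$ enters only as the length of a residual sum and closes out the estimate.
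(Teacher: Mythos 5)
Your route differs from the paper's, and unfortunately it does not close. The paper leaves the $a$-sum alone and handles the $b$-sum first: for $(am',q)=1$ the inner sum over $b$ collapses to a Ramanujan sum $c_{q/l}(am')$, which equals $\mu(q/l)$ by formula (2.4) because $(am',q/l)=1$; what remains is $\mu(q/l)\sum_{a(q)}^{\ast}e(-an/q)=\mu(q/l)\,c_q(-n)$, and the bound $|c_q(-n)|\leq (n,q)$ together with $\sum_{l\mid q}|\mu(q/l)|\leq d_2(q)$ gives the result. You instead carry out the $a$-sum first, obtaining $c_q(m'b-n)$, expand via $c_q(r)=\sum_{d\mid(q,r)}d\,\mu(q/d)$, and swap to get $\sum_{d\mid q}d\,\mu(q/d)N(d)$.

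The gap is in your evaluation of that last sum. If $N(d)=l/d+O(1)$ as you assert, the ``main term'' is $l\sum_{d\mid q}\mu(q/d)$, which is $l$ when $q=1$ and $0$ otherwise; it carries no dependence on $n$ whatsoever, so it cannot produce the factor $(n,q)$. Your claim that ``M\"obius cancellation kills every contribution except those coming from divisors $d$ dividing both $n$ and $q$'' is not substantiated: the $n$-dependence of $N(d)$ sits entirely inside the $O(1)$ fluctuation, not in the $l/d$ leading term, so nothing in the main term singles out divisors of $n$. Meanwhile the error term is of size $\sum_{d\mid q}d\,\mu^2(q/d)$, which for squarefree $q$ is $\sigma(q)\geq q$ --- far larger than $d_2(q)(n,q)$ when $(n,q)$ is small. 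In short, your own warning about losing M\"obius cancellation and landing on a bound of size $q$ is exactly what happens in the argument as written. The fix is to postpone the $a$-sum (so that $c_q(-n)$ appears and supplies the $(n,q)$), rather than spending it at the start on $c_q(m'b-n)$, which mixes $b$ and $n$ in a way that the subsequent $b$-sum cannot untangle.
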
  

\begin{proof}  
Note that  
\begin{equation}\label{ramanujan_formula}
c_{q}(n) = \mu\left(\frac{q}{(q, n)}\right) \frac{\phi(q)}{\phi\left(\frac{q}{(q, n)}\right)}.
\end{equation}  

By rearranging the double sum, it can be written as  
\[
\sum_{a(q)}^{\ast} e\left(-\frac{an}{q}\right) \sum_{l|b} c_{q/l}(am') = \sum_{a(q)}^{\ast} e\left(-\frac{an}{q}\right) \sum_{l|q} \mu(q/l).
\]  

By \eqref{ramanujan_formula}, the proof is complete.  
\end{proof}
\subsection{Proof of Theorem 1.3 assuming Theorem 1.2}
We basically follow the argument in \cite{PP}. The major arcs contribution is 
$$ 
\begin{aligned}\label{HARDY}
&\int_{\mathfrak{M}} S_1(\alpha) S_2(\alpha) e(-2 n \alpha) d \alpha \\&\quad =  \sum_{q<Q} \sum_{a (q)}^{\ast} e\left(-\frac{2 n a}{q}\right)\left\{\frac{\mu(q)^2}{\phi(q)^2} \int_{|\eta|<\beta(\delta)} T_1(\eta) T_2(\eta) e(-2 n \eta) d \eta\right. \\
& +\frac{\mu(q)}{\phi(q)} \int_{|\eta|<\beta(\delta)}\left(T_1(\eta) R_2(\eta, q, a)+T_2(\eta) R_1(\eta, q, a)\right) e(-2 n \eta) d \eta \\
& \left.+\int_{|\eta|<\beta(\delta)} R_1(\eta, q, a) R_2(\eta, q, a) e(-2 n \eta) d \eta\right\}.
\end{aligned}
$$
The first sum on the right hand-side of \eqref{HARDY} is 
$$ \sum_{q<Q} \sum_{a (q)}^{\ast} e\left(-\frac{2 n a}{q}\right) \frac{\mu(q)^2}{\phi(q)^2} \int_{[0,1]} T_1(\eta) T_2(\eta) e(-2 n \eta) d \eta + O\left(\log Q \int_{|\eta|>\beta(\delta)} \left(\frac{1}{|\eta|}\right)^{2} d\eta\right).$$
Since $\beta(\delta)^{-1} = H\delta (\log X)^{-3^{6}-1},$
the error term is $o(H).$
Also, we have 
\begin{equation}\begin{split}  \sum_{q<Q} \sum_{a (q)}^{\ast} e\left(-\frac{2 n a}{q}\right) \frac{\mu(q)^2}{\phi(q)^2} \int_{[0,1]} T_1(\eta) T_2(\eta) e(-2 n \eta) d \eta &= M^*(2 n) \sum_{q=1}^{\infty} \frac{\mu(q)^2}{\phi(q)^2} c_q(-2 n) \\& + O\left(H\left|\sum_{q>Q} \frac{\mu(q)^2}{\phi(q)^2} c_q(-2 n)\right|\right).\end{split}\end{equation}
Note that $d_{2}(n)^{2} \leq d_{4}(n).$ Therefore, by using Theorem \ref{Theorem1.1},
\begin{equation}\begin{split} \left(\sum_{n \in [X,X+H]^{\ast}} H^{2}\left|\sum_{q>Q} \frac{\mu(q)^2}{\phi(q)^2} c_q(-2 n)\right|^{2}\right)
&\ll 
H^2 \sum_{2 n \in [X,X+H]^{\ast}}\left\{\sum_{d \mid 2 n} \frac{1}{\phi(d)} \min \left(\frac{d}{Q}, 1\right)\right\}^2
\\&\ll H^{2} Q^{-2} \log Q \sum_{j \in [X,X+H]^{\ast}} d_{2}^2(j)
\\&\ll H^{2} Q^{-2} \log Q \sum_{j \in [X,X+H]^{\ast}} d_{4}(j)
\\&\ll o\left(H^{2}|[X.X+H]^{\ast}|\right).
\end{split}\end{equation}
By using Lemma \ref{Lemma1.5}, the second and third sums in \eqref{HARDY} are bounded by \( o(H) \). Since the main term  
\[
M^{\ast}(2n)\sum_{q=1}^{\infty} \frac{\mu(q)^2}{\phi(q)^2} c_q(-2n) \asymp H,
\]  
it follows that the contribution from the major arcs gives the asymptotic for almost all \( 2n \in [X, X+H]^{\ast} \).  

Now, let us consider the minor arcs. By Lemma \ref{Lemma1.6}, we have \( S_{i}(\alpha) \ll HX^{-c\varepsilon} \) for some \( c > 0 \). Therefore, combining the upper bound in Lemma \ref{may}, we have 
\[
\int_{m} \left|S_{1}(\alpha) S_{2}(\alpha)\right| \left|\sum_{n \in [X, X+H]^{\ast}} e(-n\alpha) \right|\, d\alpha \ll H^{2} X^{-2c\varepsilon} H^{\log((\log g)+1)/\log(g-1)}.
\]  
Since $|[X,X+H]^{\ast}| \asymp H^{\log g-1/\log g},$ the above upper bound is $o(H |[X,X+H]^{\ast}|). $

\section{Proof of Theorem 1.1}
In this section, we use the following notations:

\[
S_{4}(\alpha;H) := \sum_{n=X}^{X+H} d_{4}(n) e(n\alpha), \quad S_{4}(\alpha):= \sum_{n=1}^{X} d_{4}(n) e(n\alpha),
\]
\[
S_{4}^{\sharp}(\alpha;H) := \sum_{n=X}^{X+H} d_{4}^{\sharp}(n) e(n\alpha), \quad S_{4}^{\sharp}(\alpha):= \sum_{n=1}^{X} d_{4}^{\sharp}(n) e(n\alpha),
\]
\[ S_{[X,X+H]^{\ast}}(\alpha) := \sum_{n \in [X,X+H]^{\ast}} e(n\alpha)\].

For the approximant of \( d_{4}(n) \), we use the following in \cite{MSTT1}:
$$
d_4^{\sharp}(n):=\sum_{\substack{m \leq R_4^{6} \\ m \mid n}} P_m(\log n),
$$
where $R_4$ is the parameter

$$
R_4:=X^{\frac{\varepsilon}{40}},
$$

where the polynomials $P_m(t)$ (which have degree at most $3$ ) are given by the formula

$$
P_m(t):=\sum_{j=0}^{3}\binom{4}{j} \sum_{\substack{m=n_1 \cdots n_{3} \\ n_1, \ldots, n_j \leq R_4 \\ R_4<n_{j+1}, \ldots, n_{3} \leq R_4^2}} \frac{\left(t-\log \left(n_1 \cdots n_j R_4^{4-j}\right)\right)^{4-j-1}}{(4-j-1)!\log ^{4-j-1} R_4} .
$$
Note that $R_{4}^{6}= x^{3\varepsilon/20}.$
It is easy to see that 
$P_{m}(\log n )\ll_{\varepsilon} d_{3}(m)$ when $n \ll X.$ 
\subsection{Propositions} 
The following proposition shows that $d_{4}^{\sharp}(n)$ is a good approximation for $d_{4}(n)$.
\begin{Prop}\label{PROP1}\cite[Theorem 1.1 (iii)]{MSTT1} Let $X^{3/5+\varepsilon} \leq H \leq X^{1-\varepsilon}.$ Then
$$\left| \sum_{X \leq b \leq X + H} \left(d_{4}(n) - d_{4}^{\sharp}(n)\right) e(n\alpha)\right| \ll Hx^{-c\varepsilon}$$ for some $c>0.$
\end{Prop}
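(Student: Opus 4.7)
The plan is to follow the approach of \cite{MSTT1}. The key idea is to use a Heath-Brown-type identity to decompose $d_{4}(n)$ into a sum of Type I and Type II convolutions, and to show that the approximant $d_{4}^{\sharp}(n)$ is constructed to exactly capture the Type I contribution, so that only Type II bilinear pieces remain to be estimated.

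First I would write $d_{4} = \mathbf{1} \ast \mathbf{1} \ast \mathbf{1} \ast \mathbf{1}$ and apply Heath-Brown's identity at an appropriate level to expose pieces of the form $(\mu_{\leq R_4} \ast \cdots)(n)$. Depending on how the four factors in $n_{1}n_{2}n_{3}n_{4}=n$ distribute among the size ranges $\leq R_{4}$ and $(R_{4}, R_{4}^{2}]$, each resulting convolution is either a Type I sum (one smooth variable of length $\gg X/R_{4}^{6}$) or a Type II bilinear sum (two variables both of moderate length). The polynomials $P_{m}(t)$ in the definition of $d_{4}^{\sharp}$ are designed precisely so that their contribution matches the Type I pieces of $d_{4}$ after Taylor expanding the logarithms, exactly the combinatorial identity used for the higher-divisor approximants in the Matomaki--Radziwill--Shao--Tao--Teravainen program.

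Next, for the Type II pieces of $d_{4}(n)-d_{4}^{\sharp}(n)$, I would rewrite the exponential sum as a bilinear form
$$\sum_{\substack{m \sim M,\ n \sim N \\ X \leq mn \leq X+H}} a(m)\,b(n)\, e(mn\alpha)$$
with coefficients satisfying divisor bounds and with $M, N$ restricted to ranges dictated by the decomposition (both bounded away from $1$ and $X$ by powers of $R_{4}$). I would then invoke the short-interval bilinear exponential sum estimate of \cite[Lemma 3.4]{MRSTT2}, or its direct analogue in \cite{MSTT1}, which gives a saving of a power of $X^{-\varepsilon}$ uniformly in $\alpha$ whenever $H \geq X^{3/5+\varepsilon}$. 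The threshold $3/5+\varepsilon$ is precisely what the underlying large-sieve/second-moment input requires for Type II sums of $d_{4}$ shape.

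The main obstacle is the combinatorial bookkeeping of the Heath-Brown decomposition: one must check that every resulting piece either (i) is absorbed by $d_{4}^{\sharp}$ up to a manageable error, or (ii) factors as a Type II bilinear form whose variables both sit in a range where \cite[Lemma 3.4]{MRSTT2} applies. Subject to this case analysis, the analytic input is off-the-shelf, and summing the $O(1)$ many pieces yields the claimed bound $\ll H X^{-c\varepsilon}$.
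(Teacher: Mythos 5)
The paper does not actually prove Proposition~\ref{PROP1}: it is quoted verbatim as \cite[Theorem~1.1(iii)]{MSTT1}, with no argument supplied. So there is no in-paper proof to compare against, and your task here would have been simply to cite the source. Your sketch is nonetheless a fair summary of the strategy underlying that theorem — decompose the $4$-fold convolution by factor-size ranges, observe that $d_4^{\sharp}$ is built to reproduce the Type~I contribution, and estimate the remaining bilinear pieces using short-interval exponential sum inputs — so it is not wrong in spirit.

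Two points are worth correcting. First, there is no Heath-Brown identity to invoke: $d_4 = \mathbf{1}\ast\mathbf{1}\ast\mathbf{1}\ast\mathbf{1}$ is already a multilinear convolution, and the decomposition used in \cite{MSTT1} is a direct splitting of the variables $n_1n_2n_3n_4=n$ into the ranges $[1,R_4]$, $(R_4,R_4^2]$, $(R_4^2,\infty)$ (exactly the case structure visible in the definition of $P_m(t)$, with $j$ counting how many of the three small factors lie in $[1,R_4]$). Heath-Brown or Vaughan identities are needed in the companion results for $\Lambda$ and $\mu$, not for $d_k$. Second, the decomposition of $d_4 - d_4^{\sharp}$ does not reduce cleanly to Type~II pieces alone: when two of the four variables are long and carry smooth weights one gets ``Type~$I_2$'' sums, which require a separate (van der Corput / Weyl-differencing) treatment, and balancing Type~I, Type~$I_2$ and Type~II estimates is precisely where the threshold $H\geq X^{3/5+\varepsilon}$ arises. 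Your phrase ``subject to this case analysis'' correctly flags the gap, but the case analysis is the heart of the matter, not a formality.
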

By using this, we can reduce our problem to the case of $d_{4}^{\sharp}(n).$
\begin{Prop} Let $X^{3/5+\varepsilon} \leq H \leq X^{1-\varepsilon}.$ Then 
\begin{equation}\begin{split} \sum_{ n \in [X,X+H]^{\ast}} \int_{[0,1]} S_{4}(\alpha;H) e(-n\alpha) d\alpha  &=\sum_{ n \in [x,x+H]^{\ast}} \int_{[0,1]} S_{4}^{\sharp}(\alpha;H) e(-n\alpha) d\alpha \\& \quad + o(|[X,X+H]^{\ast}|).\end{split}\end{equation}
\end{Prop}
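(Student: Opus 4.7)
The plan is to collapse the two sides into the single sum $\sum_{n\in[X,X+H]^\ast}\bigl(d_4(n)-d_4^\sharp(n)\bigr)$ and estimate it by Fourier duality. Setting $D(\alpha):=S_4(\alpha;H)-S_4^\sharp(\alpha;H)$, the orthogonality relation $\int_{[0,1]} e((m-n)\alpha)\,d\alpha = \mathbf{1}_{m=n}$ gives
\[
\sum_{n\in[X,X+H]^\ast}\int_{[0,1]} D(\alpha)\,e(-n\alpha)\,d\alpha \;=\; \int_{[0,1]} D(\alpha)\,\overline{S_{[X,X+H]^\ast}(\alpha)}\,d\alpha,
\]
so it suffices to show that this single integral is $o\bigl(|[X,X+H]^\ast|\bigr)$.

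The strategy is an $L^\infty$--$L^1$ split. Proposition \ref{PROP1} provides the uniform pointwise bound $\|D\|_\infty \ll H X^{-c\varepsilon}$ for some $c>0$. For the $L^1$-factor, writing $|S_{[X,X+H]^\ast}(\alpha)| = |[X,X+H]^\ast|\, F_{[X,X+H]}(\alpha)$ and invoking Lemma \ref{may} yields
\[
\int_{[0,1]} |S_{[X,X+H]^\ast}(\alpha)|\,d\alpha \;\ll\; \bigl|[X,X+H]^\ast\bigr|^{\,\log((\log g)+1)/\log(g-1)}.
\]

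Combining the two estimates, the integral in question is bounded by
\[
H\,X^{-c\varepsilon}\,\bigl|[X,X+H]^\ast\bigr|^{\,\log((\log g)+1)/\log(g-1)}.
\]
Using $|[X,X+H]^\ast|\asymp H^{\log(g-1)/\log g}$, this equals $H^{\,1+\log((\log g)+1)/\log g}\,X^{-c\varepsilon}$, and its ratio to the target $|[X,X+H]^\ast|\asymp H^{\log(g-1)/\log g}$ is
\[
H^{\,(\log g+\log((\log g)+1)-\log(g-1))/\log g}\,X^{-c\varepsilon}.
\]
The exponent of $H$ here is $O(\log\log g/\log g)$, which tends to $0$ as $g\to\infty$; since $H\le X$, the whole ratio is at most $X^{\log\log g/\log g - c\varepsilon}$. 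The one subtlety — and the main (essentially only) obstacle — is arranging that this exponent is negative, which forces $g$ to be sufficiently large in terms of $\varepsilon$; this is exactly the hypothesis already present in Theorem \ref{Theorem1.1}. Beyond this quantitative balance, every step is a routine application of the two input estimates.
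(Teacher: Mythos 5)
Your argument is exactly the paper's: write the left side minus the right side as $\int_{[0,1]} D(\alpha)\,\overline{S_{[X,X+H]^\ast}(\alpha)}\,d\alpha$, then do the $L^\infty$--$L^1$ split using Proposition \ref{PROP1} for $\|D\|_\infty \ll HX^{-c\varepsilon}$ and Lemma \ref{may} for the $L^1$ norm of the digit-restricted exponential sum, and close by taking $g$ large enough in terms of $\varepsilon$. The only difference is that you spell out the final exponent bookkeeping explicitly, which the paper compresses into ``for sufficiently large $g$''; the underlying proof is the same.
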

\begin{proof} By using the trivial bound and Lemma \ref{may}, 
\begin{equation}\begin{split} &\sum_{ n \in [X,X+H]^{\ast}} \int_{[0,1]} \left(S_{4}(\alpha;H) - S_{4}^{\sharp}(\alpha;H)\right) e(-n\alpha) d\alpha \\& \ll sup_{\alpha \in [0,1]} \left|S_{4}(\alpha;H)- S_{4}^{\sharp}(\alpha;H)\right| \int_{[0,1]} \left|\sum_{n \in [X,X+H]^{\ast}} e(-n\alpha)\right|d\alpha 
\\& \ll HX^{-c\varepsilon} |[X,X+H]^{\ast}|^{\frac{\log (\log g) +1}{\log (g-1)}}.\end{split} \end{equation}
Therefore, for sufficiently large $g,$ the above is bounded by 
$o(|[X,X+H]^{\ast}|).$
\end{proof}
\begin{Prop}\label{Prop3.3} 
\begin{equation}\begin{split}
\sum_{n \in [X,X+H]} d_{4}^{\sharp}(n) e(an/q) e(n\beta) &= \int_{X}^{X+H} \sum_{m \leq X^{3\varepsilon}/20} \sum_{b (q)} e(\frac{a}{q}mb) \frac{1}{mq} P_{m} (\log u) e(\beta u) du  \\& +O\left(qX^{2\varepsilon}(1+H|\beta|)\right)\end{split}\end{equation}
\end{Prop}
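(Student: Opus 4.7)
The plan is to expand the definition of $d_4^{\sharp}(n)$, interchange the order of summation so that the divisor $m$ becomes the outer variable, and then evaluate the resulting sum over an arithmetic progression by replacing it with an integral via partial summation. First, I would write
$$\sum_{n\in[X,X+H]} d_4^{\sharp}(n)\,e\!\left(\tfrac{an}{q}\right) e(n\beta)
= \sum_{m\le R_4^{6}}\;\sum_{\substack{X\le n\le X+H\\ m\mid n}} P_m(\log n)\,e\!\left(\tfrac{an}{q}\right)e(n\beta),$$
substitute $n=mk$, and split the $k$-sum into residue classes modulo $q$. Since $e(amk/q)$ depends only on $k\bmod q$, writing $k\equiv b\pmod q$ transforms the inner sum into
$$\sum_{b(q)} e\!\left(\tfrac{amb}{q}\right) \sum_{\substack{X\le n\le X+H\\ n\equiv mb\,(\mathrm{mod}\,mq)}} P_m(\log n)\,e(n\beta).$$

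Next, for each fixed $(m,b)$ the innermost sum runs over an arithmetic progression of common difference $mq$, and I would apply the standard partial-summation estimate
$$\sum_{\substack{X\le n\le X+H\\ n\equiv r\,(\mathrm{mod}\,T)}} g(n) = \frac{1}{T}\int_X^{X+H} g(u)\,du + O\!\left(\sup_{[X,X+H]}|g| + \int_X^{X+H}|g'(u)|\,du\right)$$
with $g(u)=P_m(\log u)\,e(\beta u)$ and $T=mq$. The given bound $|P_m(\log u)|\ll d_3(m)$, together with the corresponding derivative bound $|P_m'(\log u)|\ll d_3(m)$ (which follows from the same argument, since $P_m$ is a polynomial of degree $\le 3$ on the bounded $t$-interval $[\log(X/e),\log(eX)]$), give $|g(u)|\ll d_3(m)$ and $|g'(u)|\ll d_3(m)(|\beta|+1/u)$, so the error per pair $(m,b)$ is $O\!\bigl(d_3(m)(1+H|\beta|)\bigr)$ using $H\le X$.

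Collecting the main terms, the integrals assemble precisely into the claimed expression
$$\int_X^{X+H}\sum_{m\le R_4^{6}}\sum_{b(q)} e\!\left(\tfrac{amb}{q}\right)\frac{1}{mq}\,P_m(\log u)\,e(\beta u)\,du,$$
while the total error is
$$\ll q(1+H|\beta|)\sum_{m\le R_4^{6}} d_3(m) \;\ll\; q(1+H|\beta|)\, R_4^{6}(\log X)^{O(1)} \;\ll\; qX^{2\varepsilon}(1+H|\beta|),$$
since $R_4^{6}=X^{3\varepsilon/20}$ and the logarithmic factors are comfortably absorbed into $X^{2\varepsilon-3\varepsilon/20}$. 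The only slightly delicate point I expect is the uniform derivative bound $|P_m'(\log u)| \ll d_3(m)$: because $P_m$ is defined by an explicit but fairly intricate sum over factorizations of $m$, one must verify from the definition that the same divisor-function control used for $P_m$ itself passes to its coefficients. Once that is in hand, the rest is a routine partial-summation calculation and a bookkeeping check on the exponents.
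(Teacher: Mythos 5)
Your proposal is correct and follows essentially the same route as the paper: expand $d_4^{\sharp}$, substitute $n=mk$, split $k$ by residue class modulo $q$, and convert the progression sum to an integral. The only cosmetic difference is that you absorb the $e(\beta u)$ factor directly into the Euler–Maclaurin/partial-summation step, whereas the paper first treats $\beta=0$ (replacing the sum over $k$ by $\frac{1}{mq}\int P_m(\log u)\,du$) and then introduces $\beta$ afterwards via a separate integration by parts, arriving at the same $O(qX^{2\varepsilon}(1+H|\beta|))$ error.
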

\begin{proof}
By the definition of $d_{4}^{\sharp}(n),$ 
$$\sum_{n \in [X,X+H]} d_{4}^{\sharp}(n) e(an/q) = \sum_{m \leq X^{3\varepsilon/20}}\sum_{k \in [X/m, (X+H)/m]} P_{m}(\log mk) e(amk/q).$$
The right hand side of the above equation is 
$$\sum_{m \leq X^{3\varepsilon/20}} \sum_{ b(q)} e(amb/q) \sum_{ X/qm \leq k \leq (x+H)/qm} P_{m}(\log m (qk + b))+ O\left(X^{3\varepsilon/20+o(1)}q\right)$$
Removing $b$ inside of $P_{m}$ cause negligible error ,so we have
$$\sum_{m \leq X^{3\varepsilon/20}} \sum_{ b(q)} e(amb/q) \sum_{ X/qm \leq k \leq (X+H)/qm} \left(P_{m}(\log mqk) + O \left(\log X)^{2}d_{3}(m)/k\right)\right).$$
By using the fact that 
$$\sum_{ X/qm \leq k \leq (X+H)/qm} P_{m}(\log mqk) = \frac{1}{mq}\int_{X}^{X+H} P_{m} (\log u) du + O\left(X^{\varepsilon}\right),$$ we have 

\begin{equation}
\sum_{n \in [x,x+H]} d_{4}^{\sharp}(n) e(an/q) =   \sum_{m \leq X^{3\varepsilon/20}} \sum_{ b(q)} e(amb/q)\frac{1}{mq}\int_{X}^{X+H} P_{m} (\log u) du + O(qX^{2\varepsilon}).
\end{equation}
By using integration by parts,
\begin{equation}\begin{split}
\sum_{n \in [x,x+H]} d_{4}^{\sharp}(n) e(an/q) e(n\beta) &=   \sum_{m \leq X^{3\varepsilon/20}} \sum_{ b(q)} e(amb/q)\frac{1}{mq}\int_{X}^{X+H} P_{m} (\log u) e(u\beta) du 
\\& \quad + O\left(qX^{2\varepsilon}(1+ H|\beta|)\right).
\end{split}\end{equation}

\end{proof}

\begin{Prop}\label{Prop1} Let $X^{3/5+\varepsilon} \leq H \leq X^{1-\varepsilon}.$ Then
\begin{equation}\label{majorerror}\begin{split}
&\sum_{n \in [X,X+H]^{\ast}}  \int_{\alpha \in \mathcal{M}} S_{4}^{\sharp}(\alpha;H) e(-n \alpha)  \, d\alpha
\\&\quad = \sum_{q \leq Q} \sum_{a (q)}^{\ast} \sum_{l_{1}|q}\sum_{ b(q/l_{1})}^{\ast} \sum_{l_{2}|q}\sum_{m' \leq X^{3\varepsilon/20}/l_{2}}  \sum_{n \in [X,X+H]^{\ast}}  \frac{1}{m'l_{2}} e(a(m'l_{2}bl_{1}-n)/q) K(H,m',l_{2},n,q) \\& + O\left(X^{5\varepsilon}H^{3\varepsilon}\right)
\end{split}\end{equation}
where 
$$K(H,m',l_{2},n,q):=\int_{X}^{X+H}  \frac{P_{m'l_{2}}(\log u)}{q(u-n)} \sin (2\pi \beta(\delta)(u-n)) du.$$
\end{Prop}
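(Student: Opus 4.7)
The strategy is to unfold the major-arc integral by the change of variable $\alpha = a/q + \beta$ on each $I'_{q,a}$, use Proposition \ref{Prop3.3} to replace $S_{4}^{\sharp}(\alpha; H)$ by its explicit integral representation, and then evaluate the resulting $\beta$-integral in closed form to produce the kernel $K$. Concretely, on each arc $I'_{q,a}$ one has
\[
\int_{I'_{q,a}} S_{4}^{\sharp}(\alpha; H) e(-n\alpha)\, d\alpha = e\!\left(-\tfrac{na}{q}\right) \int_{|\beta|<\beta(\delta)} S_{4}^{\sharp}\!\left(\tfrac{a}{q}+\beta; H\right) e(-n\beta)\, d\beta,
\]
and Proposition \ref{Prop3.3} rewrites the integrand as
\[
\int_{X}^{X+H} \sum_{m \leq X^{3\varepsilon/20}} \sum_{b(q)} e\!\left(\tfrac{amb}{q}\right) \frac{P_{m}(\log u)}{mq}\, e(\beta u)\, du \;+\; O\!\left(qX^{2\varepsilon}(1+H|\beta|)\right).
\]
Swapping the $u$- and $\beta$-integrations, the inner $\beta$-integral evaluates to $\sin(2\pi\beta(\delta)(u-n))/(\pi(u-n))$, which, after writing $m=m'l_{2}$, produces the kernel $K(H, m', l_{2}, n, q)$ of the statement.

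To put the main term in the exact form appearing on the right-hand side of \eqref{majorerror}, I would reorganise the double sum $\sum_{m}\sum_{b(q)}$ according to the gcd structure with $q$: partition $\sum_{b(q)}$ via $l_{1} = \gcd(b,q)$ to obtain $\sum_{l_{1}|q}\sum_{b(q/l_{1})}^{\ast}$, and similarly reindex $\sum_{m}$ through a divisor $l_{2}\mid q$, using the Ramanujan-type identity \eqref{ramanujan_formula} to track the interaction with the phase $e(amb/q)$. After this reorganisation, the phase becomes $e\!\bigl(a(m'l_{2}bl_{1}-n)/q\bigr)$ and the coefficient is $1/(m'l_{2})$, which matches the stated main term once we sum over $q\leq Q$, over $a\pmod{q}^{\ast}$, and over $n \in [X, X+H]^{\ast}$.

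For the error term, the $O\!\left(qX^{2\varepsilon}(1+H|\beta|)\right)$ coming from Proposition \ref{Prop3.3} integrates over $|\beta| < \beta(\delta)$ to $O(qX^{4\varepsilon}/H)$ per pair $(q,a)$, using $H\beta(\delta) = 1/(\delta D(\delta)) \ll X^{\varepsilon}$. Summing over $a\pmod{q}^{\ast}$, over $q \leq Q = X^{\varepsilon}$, and over the $\ll H$ values $n \in [X,X+H]^{\ast}$, the total error is $\ll X^{7\varepsilon}$, which is comfortably absorbed by the claimed $O(X^{5\varepsilon}H^{3\varepsilon})$ since $H^{3\varepsilon} \gg X^{2\varepsilon}$ when $H \gg X^{2/3+\varepsilon}$.

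The main obstacle is the algebraic bookkeeping in the second step: a naive substitution $m = m'l_{2}$ with $l_{2}\mid q$ overcounts the original $m$-sum, so matching the precise form of the main term requires combining the $b$- and $m$-side decompositions with the Ramanujan-sum cancellations built into \eqref{ramanujan_formula}. All remaining steps are essentially routine once Proposition \ref{Prop3.3} is in hand.
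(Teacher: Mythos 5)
Your main-term derivation (change of variable on each arc, substitute Proposition \ref{Prop3.3}, swap the $u$ and $\beta$ integrations to produce the Dirichlet-kernel factor $\sin(2\pi\beta(\delta)(u-n))/(\pi(u-n))$, then reorganize the $b$- and $m$-sums via divisors $l_{1}, l_{2}$ of $q$) matches what the paper does, and the fact that you flag the $l_{1},l_{2}$ bookkeeping as the fiddly step is fair---the paper simply asserts that form without detail. The genuine problem is in the error term.

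You bound the error contribution by pulling out the trivial estimate $\bigl|\sum_{n\in[X,X+H]^{\ast}} e(-n\alpha)\bigr| \ll |[X,X+H]^{\ast}| \ll H$, then integrate $O\!\bigl(qX^{2\varepsilon}(1+H|\beta|)\bigr)$ over the arcs and sum over $q,a,n$. Your computation of $\ll X^{7\varepsilon+o(1)}$ is essentially right, but it does not sit inside the claimed $O(X^{5\varepsilon}H^{3\varepsilon})$ throughout the stated range: at $H \asymp X^{3/5+\varepsilon}$ one has $X^{5\varepsilon}H^{3\varepsilon} \asymp X^{6.8\varepsilon+O(\varepsilon^{2})} < X^{7\varepsilon}$, and you yourself only claim absorption for $H \gg X^{2/3+\varepsilon}$. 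The paper instead uses Lemma \ref{may}: it bounds the error in $S_{4}^{\sharp}$ uniformly by $QX^{2\varepsilon}(1+H\beta(\delta))$ on the whole major arc and then controls $\int_{\mathfrak{M}}\bigl|\sum_{n} e(-n\alpha)\bigr|\,d\alpha \leq \int_{[0,1]}\bigl|\sum_{n} e(-n\alpha)\bigr|\,d\alpha \ll |[X,X+H]^{\ast}|^{\log(\log g+1)/\log(g-1)} \ll H^{\varepsilon}$ for $g$ large depending on $\varepsilon$. That $L^{1}$ bound on the Fourier transform of the digit-restricted set is the key input: it replaces your factor of $H$ (trivial count of $n$) by $H^{\varepsilon}$, giving $\ll X^{5\varepsilon}H^{\varepsilon}$, which does hold for all $H \geq X^{3/5+\varepsilon}$. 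Without Lemma \ref{may} your argument loses essentially a full power of $H$ and fails to cover the lower end of the range.

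Two small side remarks: your statement $H\beta(\delta) = 1/(\delta D(\delta)) \ll X^{\varepsilon}$ is off by the $(\log X)^{3^{6}+1}$ factor hidden in $D(\delta)$; the correct bound is $\ll X^{\varepsilon+o(1)}$, which does not change the conclusion. And the $1/\pi$ that appears when the $\beta$-integral is evaluated is indeed suppressed in the paper's kernel $K$; it resurfaces in the later analysis, so this is cosmetic.
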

\begin{proof} After integrating over $\alpha$ and by Proposition \ref{Prop3.3}, we only need to consider the error term contributions.
The contribution from the error terms is bounded by 
$$ qX^{2\varepsilon}\left(1+H\beta(\delta)\right)\int_{\alpha \in \mathcal{M}}  |[X,X+H]^{\ast}| F_{[X,X+H]}(\alpha) d \alpha. $$
By using the bounds of the error term in Proposition \ref{Prop3.3} and the exponential sum bound in Lemma \ref{may}, the above term is bounded by 
$$O\left(X^{5\varepsilon}H^{\varepsilon}\right).$$




\end{proof}
Now, we consider the contribution of the minor arcs.
\begin{Prop}\label{Prop3.5}

\[
\sum_{n \in [X,X+H]^{\ast}} \int_{\alpha \in m} S_{4}^{\sharp}(\alpha;H) e(-\alpha n) \, d\alpha =o\left(  |[1.H]^{\ast}| \right).\]

\end{Prop}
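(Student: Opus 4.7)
The plan is to treat the minor-arc integral by first switching the order of summation and integration, then bounding the result by a product of a pointwise bound for $S_4^{\sharp}$ on the minor arcs against an $L^{1}$ bound for the Fourier transform of the digit-restricted set. This mirrors the minor-arc treatment used in Section 1.2 for Theorem \ref{Theorem1.2}.

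Concretely, I would first rewrite the left-hand side as
\[
\int_{m} S_{4}^{\sharp}(\alpha;H)\,\overline{S_{[X,X+H]^{\ast}}(\alpha)}\,d\alpha,
\]
which is bounded in absolute value by
\[
\Bigl(\sup_{\alpha \in m}|S_{4}^{\sharp}(\alpha;H)|\Bigr) \int_{[0,1]}|S_{[X,X+H]^{\ast}}(\alpha)|\,d\alpha.
\]
For the $L^{1}$ factor, Lemma \ref{may} gives
\[
\int_{[0,1]}|S_{[X,X+H]^{\ast}}(\alpha)|\,d\alpha \ll |[X,X+H]^{\ast}|^{\log(\log g+1)/\log(g-1)}.
\]

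For the pointwise factor I would establish $\sup_{\alpha\in m}|S_{4}^{\sharp}(\alpha;H)| \ll HX^{-c\varepsilon}$ for some $c>0$. Since Proposition \ref{PROP1} already controls $|S_{4}(\alpha;H)-S_{4}^{\sharp}(\alpha;H)|$ uniformly by $HX^{-c\varepsilon}$, this reduces to the analogous minor-arc bound for $S_{4}(\alpha;H)$ itself, which is exactly the kind of Type~I/Type~II exponential sum estimate proved in \cite{MSTT1} for $d_{4}$ in short intervals. On the minor arcs $m$ defined here, Dirichlet approximation places the denominator of any good rational approximation to $\alpha$ into a middle range (because either $q > Q = \delta^{-1}$, or $q\le Q$ but $|\alpha-a/q|>\beta(\delta)$), which is exactly the regime where the MSTT1 arguments deliver cancellation. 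Multiplying the two bounds and using $H \asymp |[X,X+H]^{\ast}|^{\log g/\log(g-1)}$, the minor-arc contribution is at most a constant times
\[
X^{-c\varepsilon}\cdot |[X,X+H]^{\ast}|\cdot H^{\log(\log g+1)/\log g}.
\]
Since $\log(\log g+1)/\log g \to 0$ as $g\to\infty$, choosing $g$ large enough depending on $\varepsilon$ ensures $H^{\log(\log g+1)/\log g} \ll X^{c\varepsilon/2}$, which makes the whole expression $o(|[X,X+H]^{\ast}|)$, as claimed.

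The main obstacle is the uniform minor-arc bound $\sup_{\alpha\in m}|S_{4}^{\sharp}(\alpha;H)| \ll HX^{-c\varepsilon}$, since it is not stated as a separate lemma in the excerpt. One must either cite the corresponding minor-arc estimate for the $d_{4}$ exponential sum from \cite{MSTT1} directly (and then transfer via Proposition \ref{PROP1}), or redo the relevant Heath-Brown type decomposition tailored to the specific Farey dissection used here with parameters $Q=\delta^{-1}$ and $\beta(\delta)=(\log X)^{3^{6}+1}/(H\delta)$. Once this sup-bound is in hand, the rest of the argument is essentially mechanical and parallels the minor-arc estimate already carried out for Theorem \ref{Theorem1.2}.
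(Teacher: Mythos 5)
Your overall structure is the same as the paper's: rewrite the minor-arc contribution as $\int_{m} S_{4}^{\sharp}(\alpha;H)\,\overline{S_{[X,X+H]^{\ast}}(\alpha)}\,d\alpha$, pull out $\sup_{\alpha\in m}|S_{4}^{\sharp}(\alpha;H)|$, control the $L^{1}$ norm of $S_{[X,X+H]^{\ast}}$ by Lemma \ref{may}, and take $g$ large to win. Where you diverge is in how the pointwise minor-arc bound for $S_{4}^{\sharp}$ is obtained, and this is exactly the step you flag as ``the main obstacle.'' The paper does not go through $S_{4}$ at all: it observes (Lemma just before Proposition \ref{Prop4.4}) that $d_{4}^{\sharp}$ is a linear combination of $((\log X)^{1-3^{6}},A)$ type I sums, and then uses the inverse theorem for type I exponential sums (Proposition \ref{Prop4.4}, leading to Proposition \ref{Prop4.5}) which states precisely that if $|\sum f(n)e(\alpha n)|\geq\delta H$ then $\alpha$ admits a rational approximation $\|q\alpha\|\ll q/(H\delta D(\delta))$ with $q\ll D(\delta)^{-1}$. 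Since the major arcs $\mathfrak{M}$ were defined exactly as the union of such approximation ranges, the contrapositive gives $\sup_{\alpha\in m}|S_{4}^{\sharp}(\alpha;H)|\ll\delta H = HX^{-\varepsilon}$ directly, with no appeal to $S_{4}$ or to Proposition \ref{PROP1}.

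Your proposed detour -- transfer from $S_{4}^{\sharp}$ to $S_{4}$ using Proposition \ref{PROP1} and then cite an MSTT1 minor-arc bound for $S_{4}$ -- is somewhat circular in spirit, since the MSTT1 bound for $S_{4}$ on the minor arcs is itself proved by bounding $S_{4}-S_{4}^{\sharp}$ uniformly and $S_{4}^{\sharp}$ by the type I machinery; and the minor arcs in MSTT1 are calibrated to different parameters than the $Q=\delta^{-1}$, $\beta(\delta)=1/(H\delta D(\delta))$ used here, so a direct citation would not immediately apply. The cleaner move, and the one the paper makes, is to bound $S_{4}^{\sharp}$ on $m$ intrinsically via Propositions \ref{Prop4.4} and \ref{Prop4.5}. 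Apart from that one step, your reduction and the final numerology (the exponent $\log(\log g+1)/\log(g-1)\to 0$ forcing the product to be $o(|[X,X+H]^{\ast}|)$ for large $g$) are correct and match what the paper implicitly uses.
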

\begin{proof}
The contribution from the minor arc is bounded by 
$$ \sup_{\alpha \in m}|S_{4}^{\sharp}(\alpha;H)| \int_{\alpha \in m}  |[X,X+H]^{\ast}| F_{[X,X+H]}(\alpha) d \alpha.  $$
By using the bounds of the supremum in Proposition \ref{Prop4.4}, the proof is completed.
\end{proof}

 By considering the number of multiples of $q \leq Q$ in $[X,X+H]^{\ast},$ we can obtain an upper bound with a few extra power of logarithm.
\begin{Lemma}\label{Mau1} Let $g$ be a sufficiently large natural number and let \[
\int_{[0,1]} \left|\sum_{n \in [1,x]^{\ast}} e(n \alpha)\right| d\alpha \ll x^{\frac{\log \left((\log g)+1\right)}{\log (g-1)}} 
\] for sufficiently large $x.$ Then 
for every \( A > 0 \), there exists \( B>0 \) such that
\begin{equation}\label{Mau} 
\sum_{\substack{q \leq Q (\log X)^{-B} \\ (q, g(g-1)) = 1}} \max_{a \, (\bmod q)} \left|\left| \left\{ n \in [1,X]^{\ast} : n \equiv a \, (\bmod q) \right\}\right| - \frac{|[1,X]^{\ast}|}{q} \right| 
\ll_{g} |[1,X]^{\ast}| (\log X)^{-A}.
\end{equation}
\end{Lemma}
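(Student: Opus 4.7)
The plan is to use additive-character orthogonality to detect the congruence condition, reducing the discrepancy to sums of the exponential sum $S(\alpha) := \sum_{n \in [1,X]^{\ast}} e(n\alpha)$ at Farey fractions, and then to exploit the $L^{1}$ hypothesis together with the multiplicative product structure of $S$ to obtain a uniform pointwise bound at those fractions.

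First I would express the discrepancy using characters. For each $q$ with $(q,g(g-1))=1$, orthogonality modulo $q$ gives
\[
\left| \{n \in [1,X]^{\ast} : n \equiv a \pmod q \} \right| - \frac{|[1,X]^{\ast}|}{q} = \frac{1}{q}\sum_{h=1}^{q-1} e(-ah/q)\,S(h/q),
\]
so $\max_{a (\bmod q)}|\cdots| \leq \frac{1}{q}\sum_{h=1}^{q-1}|S(h/q)|$. Summing over $q \leq Q(\log X)^{-B}$ and regrouping $h/q$ by its reduced form $a/d$ (with $d \mid q$, $d>1$, $(a,d)=1$) yields
\[
\sum_{q \leq Q(\log X)^{-B}} \max_{a} |\cdots| \ll (\log X) \sum_{\substack{1 < d \leq Q(\log X)^{-B} \\ (d,g(g-1))=1}} \frac{1}{d} \sum_{(a,d)=1}|S(a/d)|.
\]

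Next, I would establish a pointwise bound on $|S(a/d)|$ by exploiting the multiplicative factorization
\[
S(\alpha) = \prod_{i=0}^{k-1} F(g^{i}\alpha), \qquad F(\beta) := \sum_{c \in \{0,\ldots,g-1\}\setminus\{b\}} e(c\beta), \quad k = \log_{g} X,
\]
which follows from the base-$g$ digit representation of elements of $[1,X]^{\ast}$. Because $(a,d)=1$ and $(d,g)=1$, the $\langle g \rangle$-orbit of $a$ modulo $d$ is contained in the nonzero residues, so every factor $|F(g^{i}a/d)| < g-1$. Combining this with the $L^{1}$ hypothesis, I would deduce a uniform bound $|S(a/d)| \ll |[1,X]^{\ast}| \cdot X^{-c(g)}$ for some $c(g)>0$. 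Since there are $\leq Q^{2} \leq X^{2\varepsilon}$ Farey pairs in the relevant range, plugging the pointwise bound into the display above gives $\sum_{q} \max_{a}|\cdots| \ll |[1,X]^{\ast}| \cdot X^{-c'}$ for $g$ sufficiently large (so that $c(g) > 2\varepsilon$), which is far stronger than the required $(\log X)^{-A}$ saving.

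The main obstacle is the pointwise bound step. The $L^{1}$ hypothesis alone does not imply pointwise smallness of $S$ at rationals; a naive smoothing argument loses an unacceptable factor of $X$ from the derivative bound $|S'(\alpha)| \ll X \cdot |[1,X]^{\ast}|$. The coprimality constraint $(d,g(g-1))=1$ is essential because it rules out the degenerate orbits on which every factor $|F(g^{i}a/d)|$ could approach the maximum $g-1$; combined with the digit factorization in the spirit of \cite{Jmay}, this delivers the required geometric saving.
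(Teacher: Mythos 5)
The paper disposes of this lemma with a one-line citation to Dartyge--Mauduit, so any self-contained argument is by definition a different route; the question is whether yours actually closes the gap. Your reduction is correct: orthogonality gives $\max_a |\cdots| \le q^{-1}\sum_{h=1}^{q-1}|S(h/q)|$, and regrouping the fractions $h/q$ by their reduced form $a/d$ yields
$\sum_{q\le Q_0}\max_a|\cdots| \ll (\log X)\sum_{1<d\le Q_0}\frac{1}{d}\sum_{(a,d)=1}|S(a/d)|$.
The product factorization $S(\alpha)=\prod_{i<k}F(g^i\alpha)$ (for $X=g^k$) and the observation that $(ad,g(g-1))=1$ forces $g^ia\not\equiv 0\pmod d$ for all $i$ are also correct and are indeed the right ingredients.

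The genuine gap is the step you flag as the ``main obstacle'' and then wave past. You assert a \emph{uniform} pointwise bound $|S(a/d)|\ll |[1,X]^{\ast}|X^{-c(g)}$ for all $1<d\le Q_0=Q(\log X)^{-B}$ with $(d,g(g-1))=1$, but you give no mechanism for it, and the two ingredients you name cannot supply it. The $L^{1}$ hypothesis is an average statement; as you yourself note, the derivative $|S'|\ll X|[1,X]^{\ast}|$ ruins any Sobolev/smoothing passage to a pointwise estimate. And the qualitative fact ``$|F(g^ia/d)|<g-1$ strictly'' carries no quantitative content: when $g^ia$ is very close to $0\pmod d$ (which happens for a run of roughly $\log_g d$ consecutive indices $i$ each time the orbit approaches $0$), the factor $|F(g^ia/d)|/(g-1)$ is $1-o(1)$. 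The bad-run argument gives at most $\asymp k/\log_g d$ indices with a definite constant saving, hence a bound of the shape $|S(a/d)|\ll |[1,X]^{\ast}|\exp\bigl(-c(g)\log X/\log d\bigr)$, which for $d$ near $Q_0\approx X^{\varepsilon}$ is only $|[1,X]^{\ast}|\exp(-c(g)/\varepsilon)$, a \emph{constant} saving rather than a power of $X$. Summing over the $\gg Q_0$ values of $d$ near $Q_0$ then produces a bound $\gg Q_0|[1,X]^{\ast}|\exp(-c/\varepsilon)$, which is far larger than the target $|[1,X]^{\ast}|(\log X)^{-A}$. So the uniform power saving you need is not delivered by the ideas you invoke, and may simply be false for the largest moduli in range; the cited Dartyge--Mauduit argument uses a more refined treatment (a genuine $d$-dependent decay for the Fourier transform at rationals, combined with the $L^{1}$ input) that your sketch does not reproduce.
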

\begin{proof} This is a special case of the results in \cite{DARTYGE2001230}.
\end{proof}

\subsection{Proof of Theorem \ref{Theorem1.1}.}
By Proposition \ref{Prop3.5}, we only need to consider the major arcs contributions. 
Let \( L_q = \beta(\delta) \), and let $P_{m,q}(u):=\frac{P_{m}(\log u)}{q}.$ By separating the integral in Proposition \ref{Prop1}, we see that 
\begin{equation}\begin{split}
\frac{1}{\pi}\int_X^{X+H} \frac{P_{m',q}(u)}{u-n} \sin \left( L_q (u - n) \right) du &\ll \left| \int_{L_q(X - n)}^{-L_{q}} + \int_{L_{q}}^{L_q(u +H - n)} P_{m',q}\left( n + \frac{u}{L_q} \right) \frac{\sin u}{u} du \right| \\& + \left| \int_{-L_q}^{L_q}  P_{m',q}(u)\left( n +\frac{u}{L_q} \right) \frac{\sin u}{u} du \right|.
\end{split}\end{equation}
Using the fact that
\[
\left| \int_0^x \frac{\sin y}{y} dy - \frac{\pi}{2} \right| \ll \frac{1}{|x|}
\]
for large \( x \neq 0 \), 
and 
\[
\frac{\sin x}{x} = 1 + o(x^{2})
\]
for small $x,$ 
we have
$$\int_{0}^{x} \frac{ \sin y}{y}dy \ll \min \left(\frac{\pi}{2}, |x|\right).$$
Also noting that $\left( P_{m',q}(x)\right)' \ll \frac{d_{3}(m')}{qx} .$
By applying integration by parts, we have
\begin{equation}\label{2inte}\left| \int_{L_q(X - n)}^{-L_q}  P_{m',q}(x)\left( n +\frac{x}{L_q} \right) \frac{\sin x}{x} dx \right| \ll \frac{d_{3}(m')}{q} + \left|\int_{L_{q}(X-n)}^{-L_{q}} \frac{d_{3}(m')}{qL_{q}\left(n+x/L_{q}\right)} \min \left(\frac{\pi}{2}, |x|\right) dx\right|
.\end{equation}
 Therefore, the integral of the right-hand side of $\eqref{2inte}$ is bounded by 
$$O\left(d_{3}(m')\left(\frac{H\delta^{-c_{1}}D(\delta)^{-1}}{qX}+\frac{1}{q}\right)\right).$$
Hence, the left-hand side of \eqref{2inte} is bounded by $d_{3}(m')/q.$
By the similar argument as the above,
we have
$$\left| \int_{L_q}^{L_q(X+H-n)}  P_{m',q}\left( n +\frac{x}{L_q} \right) \frac{\sin x}{x} dx \right| \ll \frac{d_{3}(m')}{q},$$ and 
\[
\left| \int_{-L_q}^{L_q} P_{m',q}\left( n +\frac{x}{L_q} \right) \frac{\sin x}{x} dx \right| \ll \frac{d_{3}(m')}{q}.
\]
Thus, the entire contribution from the main term is bounded by
\[
 \sum_{m \leq X^{3\varepsilon/20}} \frac{1}{m}\sum_{q \leq Q} \sum_{n \in [X,X+H]^{\ast}} \frac{d_{2}(q)|c_q(n)|d_{3}(m)}{q}.
\]
Using the crude bound \( |c_q(n)| \leq (n, q) \), we have 

\begin{equation}\label{on}
\begin{split}
&\sum_{q \leq Q} \sum_{n \in [X,X+H]^{\ast}} \frac{|c_q(n)|d_{2}(q)}{q} 
\\&\ll \sum_{a|g(g-1)} d_{2}(a)\sum_{q \leq Q \atop (q, g(g-1)) = a} \frac{d_{2}(q)}{q} \sum_{l|q} l \sum_{n \in [X,X+H]^{\ast}  \atop  (n, q) = l} 1 \\
&\ll \sum_{a|g(g-1)} \frac{d_{2}(a)}{a} \sum_{q \leq Q/a  \atop  (q, g(g-1) = 1} \frac{d_{2}(q)}{q} \sum_{l|aq} l \sum_{n \in [X,X+H]^{\ast}  \atop  l|n} 1 \\
&\ll \sum_{a|g(g-1)} d_{2}(a)\sum_{l \leq aQ} d_{2}(l)\sum_{n \in [X,X+H]^{\ast}  \atop  l|n}  \sum_{lq_1 \leq Q \atop  (lq_1, g(g-1)) = a} \frac{d_{2}(q_{1})}{q_1} \\
&\ll (\log X)^{2} \sum_{a|g(g-1)} d_{2}(a)\sum_{l \leq aQ}d_{2}(l) \sum_{n \in [X,X+H]^{\ast} \atop  l|n} 1.
\end{split}
\end{equation}

By applying \eqref{Mau} and summation by parts, we see that 
\begin{equation}\begin{split}
\sum_{l \leq aQ} d_{2}(l)\sum_{n \in [X,X+H]^{\ast} \atop  l|n} 1 &\ll_{A} |[X,X+H]^{\ast}| \left(\sum_{l \leq aQ} \frac{d_{2}(l)}{l}+ (\log X)^{-A} \right) \\& \ll |[X,X+H]^{\ast}| (\log X)^{2}.
\end{split}\end{equation}
Note that the coprimality condition in Lemma \ref{Mau1} can be easily resolved. Therefore, consider the sum over $m$ 
$$\sum_{m \leq X^{3\varepsilon/20}} \frac{d_{3}(m)}{m} \ll (\log X)^{3},$$ the proof is completed.
\section{Minor arcs}
Now we are left to treat the minor arcs contributions. 
\begin{Lemma}\label{Vinogradov}
Let $\alpha \in [0,1),$ and suppose that $\| a\alpha\|\leq \frac{A}{H} \delta^{-1}$ for at least $D(\delta) A \geq 1$ values of $a \in [A,2A].$ Then there exists a positive integer $q \leq D(\delta)^{-1}$ such that $\| \alpha q\| \leq \frac{q}{H \delta D(\delta)}.$
\end{Lemma}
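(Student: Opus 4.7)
My plan is to prove this by the classical Vinogradov pigeon-hole / spacing argument combined with Dirichlet's approximation theorem, in the style of \cite[Lemma~2.2]{Jmay} and the analogous lemmas in \cite{MRSTT2}. The hypothesis --- many $a \in [A, 2A]$ with $a\alpha$ very close to an integer --- forces $\alpha$ to have a rational approximation $p/q$ with small denominator, which is exactly the stated conclusion after matching scales.

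I would set $S = \{a \in [A, 2A] : \|a\alpha\| \leq A/(H\delta)\}$, so $|S| \geq D(\delta)A \geq 1$. Writing $a_1 < a_2 < \dots < a_N$ for the elements of $S$, the triangle inequality gives $\|(a_i - a_j)\alpha\| \leq 2A/(H\delta)$ for every pair. Since the consecutive gaps sum to at most $a_N - a_1 \leq A$, there is some index $i$ for which $q_0 := a_{i+1} - a_i$ satisfies $1 \leq q_0 \leq A/(N-1) \ll D(\delta)^{-1}$ and $\|q_0\alpha\| \leq 2A/(H\delta)$. This already produces a candidate denominator of the right size, but the distance bound is too weak.

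To upgrade the distance bound to the claimed $q/(H\delta D(\delta))$, I would next apply Dirichlet's approximation theorem at scale $Q := H\delta D(\delta)$: there exist coprime integers $p, q$ with $1 \leq q \leq Q$ and $\|q\alpha\| \leq 1/Q = 1/(H\delta D(\delta)) \leq q/(H\delta D(\delta))$. It remains to show one may take $q \leq D(\delta)^{-1}$. Suppose for contradiction that $q > D(\delta)^{-1}$. For each $a \in S$, combining the hypothesis with the Dirichlet estimate,
\[
\|ap/q\| \leq \|a\alpha\| + |a\alpha - ap/q| \leq \frac{A}{H\delta} + \frac{2A}{qH\delta D(\delta)} < \frac{1}{q},
\]
where the last inequality uses $q > D(\delta)^{-1}$ and the quantitative range of $A$. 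Since $\|ap/q\|$ is a multiple of $1/q$, this forces $\|ap/q\| = 0$, i.e., $q \mid a$ for every $a \in S$. But the number of multiples of $q$ in $[A, 2A]$ is at most $A/q + 1$, so $|S| \leq A/q + 1$, contradicting $|S| \geq D(\delta)A$ once $q$ is appreciably larger than $D(\delta)^{-1}$.

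The main obstacle I expect is tracking constants cleanly between the pigeonhole step (which produces a denominator of the right order but a loose distance) and the Dirichlet step (which produces the right distance but a possibly loose denominator), so that the final denominator really is $\leq D(\delta)^{-1}$ as stated rather than only $\ll D(\delta)^{-1}$. In practice this is handled by picking $q$ as a convergent denominator in the continued fraction expansion of $\alpha$ just below the threshold $D(\delta)^{-1}$ and invoking the best-approximation property; this is routine but requires a little bookkeeping with the constants $D(\delta)$ and $\delta$.
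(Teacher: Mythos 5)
Your opening step (pigeonhole on the $|S| \geq D(\delta)A$ elements of $S$ to find a consecutive gap $q_0 \leq D(\delta)^{-1}$ with $\|q_0\alpha\| \leq 2A/(H\delta)$) coincides with the first half of the paper's proof, which follows Tao's Vinogradov-type argument. But then you discard $q_0$ and try to upgrade the distance bound by a separate application of Dirichlet at scale $Q = H\delta D(\delta)$, followed by a contradiction argument to force the new denominator $q$ down to $\leq D(\delta)^{-1}$. That contradiction step does not go through. You need
\[
\frac{A}{H\delta} + \frac{2A}{qH\delta D(\delta)} < \frac{1}{q},
\qquad\text{i.e.}\qquad
\frac{qA}{H\delta} + \frac{2A}{H\delta D(\delta)} < 1,
\]
and you must rule out every $q$ in $(D(\delta)^{-1}, Q]$. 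But taking $q = Q = H\delta D(\delta)$ the first summand becomes $AD(\delta)$, which is $\geq 1$ by hypothesis, so the inequality already fails at the top of the range. More generally, larger $q$ makes your inequality \emph{harder}, not easier, because the dominant term $qA/(H\delta)$ grows with $q$ --- so the claim that ``the last inequality uses $q > D(\delta)^{-1}$'' is backwards. There is no choice of constants that makes this approach close, since $A$ can legitimately be as large as $\delta^{-O(1)}$ while $H\delta$ need only be a couple powers of $\log X$ larger than $A$ in the intended application (and even at the level of the abstract lemma, $AD(\delta) \geq 1$ kills the bound).

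The deeper issue is that Dirichlet's theorem uses nothing about the $|S| \gg D(\delta)A$ values of $a$, so it cannot be expected to produce a denominator as small as $D(\delta)^{-1}$ --- that would be a strong Diophantine statement about an arbitrary $\alpha$. The many-$a$ hypothesis has to be used twice: once to produce a small $q$, and once more to sharpen the distance $\|q\alpha\|$. The paper does exactly this: after obtaining $q \leq D(\delta)^{-1}$ with $\|q\alpha\| \leq 2A/(H\delta)$, it pigeonholes the elements of $S$ into residue classes mod $q$, finds a class containing $\geq D(\delta)A/q$ of them, and then observes that as $n$ runs over that class the point $(nq+r)\alpha$ advances by $\|q\alpha\|$ per step, so the $n$'s with $\|(nq+r)\alpha\| \leq A/(H\delta)$ form at most one short run of length $\ll (A/(H\delta))/\|q\alpha\| + 1$; comparing this with the lower bound $D(\delta)A/q$ yields $\|q\alpha\| \ll q/(H\delta D(\delta))$. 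You should replace your Dirichlet step with this second pigeonhole/run-length argument (or, equivalently, the corresponding step in Tao's Lemma 1.1.14), working with the $q_0$ you already produced.
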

\begin{proof} We basically follow the argument as in the proof in \cite[Lemma 1.1.14]{Taobook1}. By the pigeonhole principle, there exist two different integers $a, a' \in [A.2A]$ such that $ |a - a'| \leq D(\delta)^{-1}.$ Set $q= a - a',$ so we have 
$ \| q\alpha \| \leq 2 \frac{A}{H}\delta^{-c_{1}}.$ Since $AD(\delta) \gg 1,$ we can consider $q$ arithmetic progressions, and by the pigeonhole principle, there exists a $r \in \{0,1,2,...,q-1\}$ 
such that 
$$\{ A/q + O(1) \leq n \leq 2A/q + O(1) : \| \alpha (nq+ r)\| \leq \frac{A}{H}\delta^{-1}\} $$ has cardinality at least $D(\delta)A/q.$ By considering intervals in the above set, we see that at most one interval (longer than length 1) appears in the set. Therefore, $2 \frac{A}{H} \delta^{-c_{1}} / \|q \alpha\| \geq D(\delta)A/q. $
 
 \end{proof}
Now, let us define $(\delta, A)$ Type I sums, which are somewhat simplified versions of those in \cite{MSTT1}. For the details, see \cite{MSTT1}.
\begin{Def}
Let $0<\delta<1.$ A $(\delta, A)$ type I sum is an arithmetic function of the form $f(n)= \alpha \ast \beta (n):= \sum_{d|n} \alpha(d) \beta(n/d),$ where $\alpha$ is supported on $[1,A],$ and satisfies
$$\sum_{n \leq N} |\alpha(n)|^{2} \leq \frac{1}{\delta} N,$$
$$\| \beta\|_{TV(\mathbb{N},q)} \leq 1$$ for all $N \geq 1$ and for some $1 \leq q \leq 1/\delta .$
\end{Def}
In \cite[Lemma 2.19]{MRSTT2}, the authors proved the following lemma. 
\begin{Lemma}\cite[Lemma 2.19]{MRSTT2}
There exists $J \ll 1$ and sequences $\{a_{i}(n)\}_{i=1}^{J},$ supported on $[1,X^{\varepsilon/5}]$ with $a_{i}(n) \ll d_{3}(n)^{3}$ such that, for each $1 \leq n \leq x$ we have 
$d_{4}^{\sharp}(n) = \sum_{ 1 \leq i \leq J} (a_{j} \ast \phi_{j})(n)$ where $\phi_{j}(n)= (\log n)^{l_{j}} / (\log x)^{l_{j}}$ for some $0 \leq l_{j} \leq 3.$
\end{Lemma}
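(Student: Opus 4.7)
The plan is to unfold the definition $d_4^\sharp(n) = \sum_{m \mid n,\, m \le R_4^6} P_m(\log n)$ and to rewrite $P_m(\log n)$ as a polynomial in $\log(n/m)$ (rather than in $\log n$) whose coefficients depend only on $m$. Once this is achieved, the convolution decomposition $d_4^\sharp = \sum_l a_l * \phi_l$ is forced, because for the prescribed $\phi_l$ we have $(\log(n/m))^l = (\log x)^l \phi_l(n/m)$, which is exactly the local factor appearing in the Dirichlet convolution.

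The key algebraic step is the following substitution. Fix a decomposition $m = n_1 n_2 n_3$ appearing in $P_m$ and write $\log n = \log(n/m) + \log(n_1 n_2 n_3)$ inside the bracket; this gives
$$\log n - \log(n_1 \cdots n_j R_4^{4-j}) = \log(n/m) + c_{\mathbf{n}, j}, \quad c_{\mathbf{n}, j} := \log(n_{j+1} \cdots n_3) - (4-j)\log R_4,$$
where the shift $c_{\mathbf{n}, j}$ depends only on the decomposition and on $j$. A case check over $j \in \{0,1,2,3\}$, using $n_i \le R_4$ for $i \le j$ and $R_4 < n_i \le R_4^2$ for $i > j$, yields the uniform bound $|c_{\mathbf{n}, j}| \le 4 \log R_4$. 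Binomial expansion of $(\log(n/m) + c_{\mathbf{n}, j})^{4-j-1}$ followed by exchange of summation then produces
$$P_m(\log n) = \sum_{l=0}^{3} b_{m, l} \bigl(\log(n/m)\bigr)^l, \quad b_{m, l} = \sum_{j=0}^{3-l} \binom{4}{j}\binom{4-j-1}{l} \sum_{m = n_1 n_2 n_3} \frac{c_{\mathbf{n}, j}^{4-j-1-l}}{(4-j-1)!\, \log^{4-j-1} R_4}.$$

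From $|c_{\mathbf{n}, j}| \le 4 \log R_4$, each term of the inner sum contributes $O((\log R_4)^{-l})$ after division, and the number of ordered factorisations of $m$ into three parts is $d_3(m)$; hence $|b_{m, l}| \ll d_3(m)(\log R_4)^{-l}$. Setting $\phi_l(k) := (\log k / \log x)^l$ and $a_l(m) := b_{m, l}(\log x)^l \mathbf{1}_{\{m \le R_4^6\}}$, the identity $R_4^6 = X^{3\varepsilon/20} \le X^{\varepsilon/5}$ places the support of $a_l$ inside $[1, X^{\varepsilon/5}]$, while $(\log x/\log R_4)^l = (40/\varepsilon)^l$ gives $|a_l(m)| \ll_\varepsilon d_3(m) \le d_3(m)^3$ for $l \in \{0,1,2,3\}$. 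Assembling the pieces,
$$d_4^\sharp(n) = \sum_{m \mid n,\, m \le R_4^6} P_m(\log n) = \sum_{l=0}^{3}\sum_{m \mid n} a_l(m)\, \phi_l(n/m) = \sum_{l=0}^{3} (a_l * \phi_l)(n),$$
which is the desired decomposition with $J = 4$ and $l_j \in \{0, 1, 2, 3\}$. The only delicate point is the uniform estimate $|c_{\mathbf{n}, j}| = O(\log R_4)$ across all four values of $j$, since it is this bound that makes $b_{m, l}$ (and therefore $a_l$) independent of $n$ and controlled by a constant times $d_3(m)$; everything else is direct binomial bookkeeping.
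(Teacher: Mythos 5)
Your proof is correct. The paper does not give a proof of this statement at all — it simply cites it as Lemma 2.19 of [MRSTT2] — so there is no internal argument to compare against; what you have done is supply a self-contained derivation of the cited result, and it holds up.

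The decisive observation is exactly the one you flag as delicate: writing $\log n = \log(n/m) + \log(n_1n_2n_3)$ turns each summand of $P_m(\log n)$ into a polynomial in $\log(n/m)$ whose shift $c_{\mathbf{n},j}$ is uniformly $O(\log R_4)$ across all four values of $j$, because $j$ of the $n_i$ lie in $[1,R_4]$ and the remaining $3-j$ lie in $(R_4,R_4^2]$ (in fact $|c_{\mathbf{n},j}|\le 2\log R_4$; your $4\log R_4$ is a harmless overshoot). After binomial expansion and reindexing by the power $l$ of $\log(n/m)$, the $\log^{4-j-1}R_4$ in the denominator absorbs all but a factor $(\log R_4)^{-l}$, and the count of ordered factorizations $m=n_1n_2n_3$ under the size constraints is $\le d_3(m)$, giving $|b_{m,l}|\ll d_3(m)(\log R_4)^{-l}$. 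Renormalising by $(\log x)^l = (40/\varepsilon)^l(\log R_4)^l$ then yields $|a_l(m)|\ll_\varepsilon d_3(m)\le d_3(m)^3$, the support $m\le R_4^6 = X^{3\varepsilon/20}\subset[1,X^{\varepsilon/5}]$ is immediate, and $J=4$ with $l_j\in\{0,1,2,3\}$ certainly satisfies $J\ll 1$. Two cosmetic points: the inner sum in your displayed formula for $b_{m,l}$ should carry the range conditions $n_1,\dots,n_j\le R_4$, $R_4<n_{j+1},\dots,n_3\le R_4^2$ (which you clearly intend but omit from the notation), and it would be worth stating explicitly that the $j=3$ term is constant in $t$ so contributes only to $l=0$ — this is automatic from $\binom{3-j}{l}=0$ for $l>3-j$, but spelling it out avoids any worry about degenerate binomials. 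Neither affects correctness.
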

Therefore, by using Shiu's theorem, it is easy to see that $d_{4}^{\sharp}(n)$ is a linear combinations of $((\log^{1-3^{6}}(X),A)$ Type I sums, where $A \in [1, X^{J\varepsilon/5}].$ 
Now, we are going to use the following inverse theorem.
 \begin{Prop}\label{Prop4.4}
 Let $2 \leq H \leq x.$ Let $f$ be a $((\log X)^{1-3^{6}}, A)$ type I sum for some $A>1,$ such that 
   $$\left|\sum_{x \leq n \leq x+H} f(n)e(\alpha n)\right| \geq \delta H.$$

 Then either $H \ll \delta^{-O(1)}A $ or  
 $$\|q \alpha \| \ll \frac{q}{H\delta D(\delta)}$$ for some $q \ll D(\delta)^{-1}.$
 \end{Prop}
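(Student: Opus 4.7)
The plan is to argue by contrapositive. Assume that (i) the approximation $\|q\alpha\| \ll q/(H\delta D(\delta))$ fails for every $q \leq D(\delta)^{-1}$ and (ii) $H \gg \delta^{-O(1)} A$, and show that $\left|\sum_{X \leq n \leq X+H} f(n) e(\alpha n)\right| < \delta H$, contradicting the hypothesis. Under (i), the contrapositive of Lemma \ref{Vinogradov} supplies the central counting bound: for every dyadic scale $A_0 \in [D(\delta)^{-1}, A]$,
\[
\bigl|\{d \in [A_0, 2A_0] : \|d\alpha\| \leq A_0/(H\delta)\}\bigr| < D(\delta) A_0.
\]

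Writing $f = \alpha_1 \ast \beta$ as in Definition 4.2 and expanding,
\[
\sum_{X \leq n \leq X+H} f(n) e(\alpha n) = \sum_{d \leq A} \alpha_1(d) G_d(\alpha), \qquad G_d(\alpha) := \sum_{m \in I_d} \beta(m) e(\alpha d m),
\]
with $I_d := [X/d, (X+H)/d] \cap \mathbb{Z}$. Partial summation exploiting the $q_0$-scale total-variation bound on $\beta$, for some $q_0 \leq (\log X)^{3^6-1}$, then gives the dichotomy
\[
|G_d(\alpha)| \ll \min\bigl(H/d,\ q_0/\|\alpha d\|\bigr).
\]
This is the analogue, in the simplified framework of Definition 4.2, of the partial-summation estimates used in \cite{MSTT1} and \cite{MRSTT2}.

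Next, decompose $d$ into dyadic ranges $[A_0, 2A_0]$ with $A_0$ a power of $2$ up to $A$, and within each range further split by the dyadic size of $\|\alpha d\|$: set $\mathcal{D}_{A_0,j} := \{d \in [A_0, 2A_0] : \|\alpha d\| \in (2^{-j-1}, 2^{-j}]\}$. For each $j$ with $2^{-j} \leq A_0/(H\delta)$, a rescaled application of Lemma \ref{Vinogradov} (using the effective length $H_{\mathrm{eff}} = A_0 \delta^{-1} 2^{j} \geq H$ and checking that (i) implies the required non-approximation at this height) yields $|\mathcal{D}_{A_0,j}| < D(\delta) A_0$; for the remaining $j$, the trivial bound $|\mathcal{D}_{A_0,j}| \leq 2A_0$ combined with $|G_d| \ll q_0 \cdot 2^{j+1}$ and $2^{-j} > A_0/(2H\delta)$ suffices.

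Finally, apply Cauchy-Schwarz on each piece using $\sum_{d \in [A_0, 2A_0]} |\alpha_1(d)|^2 \leq 2A_0 (\log X)^{3^6-1}$, and sum. The exponents $3^6$ in the Type I definition and $3^6+1$ in $D(\delta)$ are chosen so that $(D(\delta) \cdot (\log X)^{3^6-1})^{1/2} = (\log X)^{-1}$, providing one spare logarithm per dyadic scale to absorb the $O(\log X)$ ranges of $A_0$ and $j$. The main obstacle is extracting an additional power of $\delta$ to pass from a bound of size $(\log X)^{O(1)} H$ (which would result from naively combining the Vinogradov count $|\mathcal{D}_{A_0,j}| < D(\delta) A_0$ with the trivial $|G_d| \leq H/A_0$) down to a bound of genuine size $\delta H$. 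This extra $\delta$-saving is harvested from the intermediate thresholds in $\|\alpha d\|$ --- namely, the bound $|G_d| \ll q_0/\|\alpha d\| \leq q_0 H\delta/A_0$ on the "good" block $\|\alpha d\| > A_0/(H\delta)$ --- together with hypothesis (ii), which ensures the small dyadic scales $A_0 < D(\delta)^{-1}$ (where Lemma \ref{Vinogradov} yields no information) contribute only $o(\delta H)$ even when bounded trivially by $|\alpha_1(d)| \cdot H/d$.
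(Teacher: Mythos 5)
Your contrapositive re-formulation and the dyadic decomposition both in $d$ and in $\|\alpha d\|$ is a genuinely different presentation from the paper's proof, which goes forward: from $\delta H\leq \sum_{a\leq A}|\alpha(a)||T_a|$ it pigeonholes a \emph{single} dyadic block $[B,2B]$ carrying mass $\gg\delta H/\log X$, applies Cauchy--Schwarz with the $L^2$ bound on $\alpha$ inside that block, and then splits into the cases $D(\delta)B\ll1$ (a single $a$ gives $\|a\alpha\|$ small) and $D(\delta)B\gg1$ (many $a$ give $\|a\alpha\|$ small, and Lemma~\ref{Vinogradov} is applied). Crucially, the paper only needs to extract information from that one pigeonholed block, so it never has to sum over all dyadic scales.

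This is exactly where your version has a gap. In the ``bad'' regime $\|\alpha d\|\leq A_0/(H\delta)$, the contrapositive of Lemma~\ref{Vinogradov} gives $|\mathcal{D}_{A_0,j}|<D(\delta)A_0$, and note that $D(\delta)=(\log X)^{-3^6-1}$ is a \emph{fixed} quantity independent of $\delta$. Combining this with $|G_d|\ll H/A_0$ and Cauchy--Schwarz with $\sum_{d\in[A_0,2A_0]}|\alpha_1(d)|^2\ll A_0(\log X)^{3^6-1}$, the contribution from the union of bad blocks at a single dyadic scale $A_0$ is
\[
\ll\frac{H}{A_0}\left(A_0(\log X)^{3^6-1}\right)^{1/2}\left(D(\delta)A_0\right)^{1/2}=\frac{H}{\log X},
\]
carrying \emph{no power of $\delta$ at all}. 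Summing over the $O(\log X)$ dyadic scales $A_0\geq D(\delta)^{-1}$ yields $O(H)$, which is nowhere near the target $\delta H$ once $\delta$ is small (as in the application, where $\delta\asymp X^{-\varepsilon}$). The ``one spare logarithm'' you identify can absorb one $\log$-factor but cannot manufacture a factor of $\delta$; the claim that the $\delta$-saving is ``harvested from the intermediate thresholds'' only applies to the good blocks $\|\alpha d\|>A_0/(H\delta)$, where $|G_d|\ll q_0/\|\alpha d\|$, and even there the bound is $\ll q_0\delta H(\log X)^{O(1)}$ per scale — still larger than $\delta H$ after summing over $A_0$, since $q_0$ can be as large as $(\log X)^{3^6-1}$. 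Finally, your treatment of the small scales $A_0<D(\delta)^{-1}$ via hypothesis~(ii) and the trivial bound $|G_d|\leq H/d$ also does not close: the trivial bound gives $\sum_{d\leq D(\delta)^{-1}}|\alpha_1(d)|H/d\ll H(\log X)^{(3^6-1)/2}$, far above $\delta H$; what actually works here is to invoke hypothesis~(i) to lower-bound $\|\alpha d\|$ for $d\leq D(\delta)^{-1}$ and then use $|G_d|\ll 1/\|\alpha d\|$, not the trivial bound. In short, the contrapositive route forces you to bound the total over all scales, and the available Vinogradov-type count is too coarse for that; the paper's pigeonhole-first structure is not merely cosmetic, it is what makes the argument close.
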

\begin{proof} Assume that $H > \delta^{-O(1)}A.$
We follow the argument in the proof of \cite[Theorem 4.2]{MSTT1}. Since $f= \alpha \ast \beta,$ 
$$\delta H \leq \sum_{a \leq A} |\alpha (a)|\left| \sum_{X/a \leq b \leq X/a +H/a} \beta(b) e( ab \alpha)\right|.$$
By the pigeonhole principle, there exists $1 \leq B \leq A$ such that 
$$\sum_{B \leq a \leq 2B} |\alpha(a)| |\left| \sum_{X/a \leq b \leq X/a +H/a} \beta(b) e( ab \alpha)\right| \geq \delta H / \log x.$$
By applying the Cauchy-Schwartz inequality, 
$$\sum_{B \leq a \leq 2B} \left( \left| \sum_{X/a \leq b \leq X/a +H/a} \beta(b) e( ab \alpha)\right|\right)^{2} \geq \frac{\delta^{2} H^{2}}{ (\log X)^{2}} \frac{1}{B (\log B)^{3^{6}-1}}.$$
Let $D(\delta)B \ll 1.$ Then there exists $a \in [B,2B]$ such that 
$$  \left| \sum_{X/a \leq b \leq X/a +H/a}  e( ab \alpha)\right| \gg \delta H/(B (\log B)^{(3^{6}-1)/2}\log X).$$ Therefore, 
$ \| a \alpha \| \leq \frac{B(\log X)^{(3^{6}+1)/2}}{\delta H}.$
Let $D(\delta)B \gg 1.$ Then
there should be at least $D(\delta)B$ choices of $a \in [B,2B]$ such that 
$$  \left| \sum_{X/a \leq b \leq X/a +H/a}  e( ab \alpha)\right| \gg \delta H/B.$$
Therefore, by applying Lemma \ref{Vinogradov}, the proof is completed. 
\end{proof}
By combining the above results, we get the following:
\begin{Prop}\label{Prop4.5}
Let $\alpha \in m.$ Then
$$S_{4}^{\sharp}(\alpha;H) \ll \delta H.$$ 
\end{Prop}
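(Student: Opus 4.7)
The plan is to derive Proposition~\ref{Prop4.5} by contradiction, applying the Type~I inverse theorem in Proposition~\ref{Prop4.4} to each piece in the Type~I decomposition of $d_4^{\sharp}$ supplied by the preceding lemma. Suppose for contradiction that there exists $\alpha \in m$ with $|S_4^{\sharp}(\alpha;H)| \geq C \delta H$ for some large absolute constant $C$ to be chosen. I would first invoke the decomposition $d_4^{\sharp}(n) = \sum_{j=1}^{J} (a_j \ast \phi_j)(n)$ with $J \ll 1$, each $a_j$ supported on $[1, X^{\varepsilon/5}]$ and satisfying $|a_j(n)| \ll d_3(n)^3$, and each $\phi_j(n) = (\log n)^{l_j}/(\log X)^{l_j}$ with $l_j \in \{0,1,2,3\}$. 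The standard moment bound $\sum_{n \leq N} d_3(n)^6 \ll N (\log N)^{3^{6}-1}$ combined with the fact that each smooth weight $\phi_j$ has total variation bounded by an absolute constant on $[1,X]$ shows that each convolution $a_j \ast \phi_j$ is (up to a harmless normalization) a $((\log X)^{1-3^{6}}, A)$-Type~I sum with $A \leq X^{C'\varepsilon}$ for some absolute constant $C'$. By the triangle inequality and pigeonholing in $j$, at least one such Type~I sum $f$ must satisfy $\bigl|\sum_{X \leq n \leq X+H} f(n) e(\alpha n)\bigr| \gg \delta H$.

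Next I would apply Proposition~\ref{Prop4.4} to $f$. Since $H \geq X^{2/3+\varepsilon}$, while $A \leq X^{C'\varepsilon}$ and $\delta = X^{-\varepsilon}$, choosing $\varepsilon$ small (equivalently $g$ large) rules out the first alternative $H \ll \delta^{-O(1)} A$. Hence Proposition~\ref{Prop4.4} produces an integer $1 \leq q \ll D(\delta)^{-1}$ with $\|q\alpha\| \ll q/(H \delta D(\delta)) = q\beta(\delta)$. Writing $\alpha = r/q + \eta$ with $(r,q)=1$ and $|\eta| \leq \beta(\delta)$, this places $\alpha$ in $I'_{q,r}$. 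Moreover $q \ll D(\delta)^{-1} = (\log X)^{3^{6}+1} \leq Q = X^{\varepsilon}$ for large $X$, so $\alpha \in \mathfrak{M}$, contradicting $\alpha \in m$.

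The only delicate point is to verify that the output of the decomposition lemma genuinely meets the Type~I definition with parameter $(\log X)^{1-3^{6}}$: the $L^{2}$ mass of $a_j$ must be absorbed by the $(\log X)^{3^{6}-1}$ slack, and the smooth weight $\phi_j$ must have bounded total variation after normalization. Both points follow from routine divisor moment estimates for $d_3$ and from monotonicity of $(\log u)^{l_j}$, so I expect no obstacle beyond careful bookkeeping; the substantive work is entirely contained in the inverse theorem (Proposition~\ref{Prop4.4}) and the Vinogradov-type lifting in Lemma~\ref{Vinogradov}.
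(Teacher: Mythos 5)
Your proof is correct and takes essentially the same route the paper intends: the paper states Proposition~\ref{Prop4.5} with only the remark ``by combining the above results,'' and your argument---decompose $d_4^{\sharp}$ into $((\log X)^{1-3^{6}},A)$-Type~I pieces via the MRSTT2 lemma, pigeonhole, apply the inverse theorem of Proposition~\ref{Prop4.4}, and note that the resulting Diophantine condition $\|q\alpha\|\ll q\beta(\delta)$ with $q\ll D(\delta)^{-1}\leq Q$ would place $\alpha$ in $\mathfrak{M}$---is precisely the intended combination. The only cosmetic slips are that the denominator you extract from $\|q\alpha\|\leq q\beta(\delta)$ need not be coprime to the numerator (reduce $m/q$ to lowest terms $r/q'$ with $q'\leq q$), and the phrase ``equivalently $g$ large'' should just be ``$\varepsilon$ small,'' since the smallness of $\varepsilon$ and the largeness of $g$ are independent hypotheses in the paper; neither affects the argument.
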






\bibliographystyle{plain}   
\bibliography{over}  
\end{document}